\newcommand{\E}{\mathbb E}
\newcommand{\1}{\mathbf{1}}
\newcommand{\x}{\mathbf{x}}
\newcommand{\y}{\mathbf{y}}
\newcommand{\prob}{\mathbb P}
\newcommand{\etX}{X^{\eta}}
\newcommand{\rW}{W^{\rho}}
\newcommand{\iv}{I^{\mathrm{Var}}}
\newcommand{\BKKKL}{\text{BKKKL}}
\newcommand{\f}{\check{f}}
\newcommand{\g}{\check{g}}
\newcommand{\gcal}{\mathcal G}
\newcommand{\Real}{\mathbb R}
\newcommand{\ig}{I^{\gcal}}
\newcommand{\eps}{\epsilon}
\newcommand{\sr}{s^\rho}
\newcommand{\cov}{\mathrm{Cov}}
\newcommand{\var}{\mathrm{Var}}
\newcommand{\ga}{\alpha}
\newcommand{\zg}{Z^{\mathcal {G}}}
\newcommand{\vg}{\mathrm{VAR}^{\mathcal {G}}}
\theoremstyle{definition}
\newtheorem{example}{Example}[section]
\newtheorem{definition}[example]{Definition}
\newtheorem{observation}[example]{Observation}
\theoremstyle{plain}
\newtheorem{lemma}[example]{Lemma}
\newtheorem{proposition}[example]{Proposition}
\newtheorem{theorem}[example]{Theorem}
\newtheorem{corollary}[example]{Corollary}
\theoremstyle{remark}
\newtheorem{remark}[example]{Remark}
\numberwithin{equation}{section}
\begin{document}

\title{Geometric Influences II: Correlation Inequalities and Noise Sensitivity}

\author{Nathan Keller$^*$}
\email{nathan.keller@math.biu.ac.il}

\thanks{$^*$ Bar Ilan University. Part of the work was done while the author was with
the Weizmann Institute of Science and was supported by the
Koshland Center for Basic Research.}

\address{Department of Mathematics, Bar Ilan University, Ramat Gan, Israel.}

\author{Elchanan Mossel $^\dagger$}
\email{mossel@stat.berkeley.edu}
\thanks{$^\dagger$ Weizmann Institute of Science and U.C. Berkeley. Supported by by NSF grant DMS 1106999 and DOD ONR grant N000141110140 and  by ISF grant 1300/08}
\address{Dept. of Statistics, 367 Evans Hall Berkeley, CA 94720.}

\author{Arnab Sen $^\ddagger$}
\email{a.sen@statslab.cam.ac.uk}

\address{Statistical Laboratory, Dept.\ of  Pure Mathematics and Mathematical Sciences, Wilberforce Road, CB3 0WB, UK.}
\thanks{$\ddagger$ Cambridge University.   Supported by EPSRC grant EP/G055068/1.}
\thanks{}


\keywords{Influences, geometric influences, noise sensitivity,
correlation between increasing sets, Talagrand's bound,  Gaussian
measure, isoperimetric inequality}

\subjclass{60C05, 05D40}

\begin{abstract}
In a recent paper, we presented a new definition of influences in
product spaces of continuous distributions, and showed that
analogues of the most fundamental results on discrete influences,
such as the KKL theorem, hold for the new definition in Gaussian
space. In this paper we prove Gaussian analogues of two of the
central applications of influences: Talagrand's lower bound on the
correlation of increasing subsets of the discrete cube, and the
Benjamini-Kalai-Schramm (BKS) noise sensitivity theorem. We then
use the Gaussian results to obtain analogues of Talagrand's bound
for all discrete probability spaces and to reestablish analogues
of the BKS theorem for biased two-point product spaces.
\end{abstract}

\maketitle

\section{Introduction}

\begin{definition}
Consider the discrete cube $\{-1,1\}^n$ endowed with the uniform
measure $\nu^{\otimes n} = (\frac12 \delta_{-1} + \frac12
\delta_{1})^{\otimes n}$, and let $f:\{-1,1\}^n \rightarrow
\mathbb R$. The influence of the $i$-th coordinate on $f$ is
defined as
\begin{equation}\label{def:inf}
I_i(f):= \mathbb{E_\nu} \Big[ \big|f(X)-
f(X^{[i]})\big| \Big],
\end{equation}
where $X = (X_1, \ldots, X_n)$ is a random vector in $\{-1,1\}^n$ distributed according to the measure $\nu^{\otimes n}$, and $X^{[i]}$ denotes the vector obtained from
$X$ by replacing  $X_i$ by~$-X_i$ and leaving the other
coordinates unchanged. The subscript $\nu$ in $\E_\nu$ emphasizes the fact that the expectation is taken w.r.t.\ the measure $\nu^{\otimes n}$. For a subset $A$ of the discrete cube
$\{-1,1\}^n$, we write $I_i(A)$ as a shorthand for $I_i(1_A)$, and
refer to it as the influence of the $i$-th coordinate on $A$. \end{definition}
The notion of influences of variables on Boolean functions is one
of the central concepts in the theory of discrete harmonic
analysis. In the last two decades it found several applications in
diverse fields, including Combinatorics, Theoretical Computer
Science, Statistical Physics, Social Choice Theory, etc.\ (see, for
example, the survey articles \cite{kalai06,Ryan-Survey}).

Two of the central applications are Talagrand's lower bound on the
correlation between increasing subsets of the discrete cube~\cite{Talagrand2}
and the Benjamini-Kalai-Schramm (BKS) theorem on noise sensitivity~\cite{BKS}.

Talagrand's result is an improvement over the classical
Harris-Kleitman correlation inequality~\cite{Harris,Kleitman}
stating that any two increasing (see
Definition~\ref{def:increasing} below) subsets of the discrete
cube are non-negatively correlated.
\begin{theorem}[Talagrand] \label{Thm:Talagrand-Correlation}
For any pair of increasing subsets $A,B \subset \{-1,1\}^n$,
\[
\nu^{\otimes n}(A \cap B)-\nu^{\otimes n}(A)\nu^{\otimes n}(B)
\geq c \varphi \left(\sum_{i=1}^n I_i(A) I_i(B) \right),
\]
where $\varphi(x)=x/\log(e/x)$, and $c>0$ is a  universal constant.
\end{theorem}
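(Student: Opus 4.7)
The plan is to follow Talagrand's original strategy: combine a semigroup integral representation of the covariance with the Bonami--Beckner hypercontractive inequality. Throughout, write $\sigma := \sum_i I_i(A) I_i(B)$, let $T_\rho$ denote the noise operator $T_\rho h = \sum_S \rho^{|S|} \widehat h(S) \chi_S$ on $\{-1,1\}^n$, and let $D_i$ denote the discrete partial derivative in the $i$-th coordinate. The structural observation driving the argument is that for monotone $f = 1_A$ one has $\widehat{1_A}(\{i\}) = I_i(A)/2$, so the level-one contribution to the Parseval expansion $\mathrm{Cov}(1_A, 1_B) = \sum_{S \neq \emptyset} \widehat{1_A}(S) \widehat{1_B}(S)$ is exactly $\sigma/4$; the rest of the argument is devoted to controlling the contribution from $|S| \geq 2$.

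Differentiating $t \mapsto \mathbb{E}[(T_t 1_A) \cdot 1_B]$ and recognising that the induced Dirichlet form equals $\sum_i \mathbb{E}[D_i f \cdot D_i g]$ yields the identity
\begin{equation*}
\mathrm{Cov}(1_A, 1_B) \;=\; \int_0^1 \sum_{i=1}^n \mathbb{E}\bigl[(T_t D_i 1_A)(D_i 1_B)\bigr] \, dt.
\end{equation*}
Since $A$ and $B$ are increasing, $D_i 1_A, D_i 1_B \geq 0$, so the integrand is non-negative, and each inner product admits the decomposition $\mathbb{E}[(T_t D_i 1_A)(D_i 1_B)] = \tfrac{1}{4} I_i(A) I_i(B) + \mathrm{Cov}(T_t D_i 1_A, D_i 1_B)$. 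Writing $T_t = T_{\sqrt t}^2$, bounding the remaining covariance by Cauchy--Schwarz followed by Bonami--Beckner $\|T_{\sqrt t} u\|_2 \leq \|u\|_{1+t}$, and using the elementary $L^p$ estimate $\|D_i 1_A\|_p \leq (I_i(A))^{1/p}$ (valid because $D_i 1_A$ is supported on $\{0, 1/2\}$ for monotone $A$), one obtains for every $t \in (0,1)$
\begin{equation*}
\sum_i \mathbb{E}\bigl[(T_t D_i 1_A)(D_i 1_B)\bigr] \;\geq\; \frac{\sigma}{4} \;-\; C \sum_{i=1}^n \bigl(I_i(A) I_i(B)\bigr)^{1/(1+t)}.
\end{equation*}

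The final step restricts the integral to $t \in [t^\star, 1]$ with $t^\star \asymp 1/\log(e/\sigma)$, discards the non-negative contribution on $[0, t^\star]$, and converts the resulting lower bound into $c \cdot \sigma/\log(e/\sigma) = c\,\varphi(\sigma)$. I expect the principal obstacle to be extracting the correct dependence on $\sigma$ from the error sum $\sum_i (I_i(A) I_i(B))^{1/(1+t)}$: because the exponent $1/(1+t)$ is strictly less than one, small summands are inflated and a naive bound would replace $\sigma$ by a much larger quantity. The standard resolution is to partition the coordinates into ``heavy'' ones with $I_i(A) I_i(B)$ above a threshold $\tau = \tau(\sigma, t)$ and ``light'' ones below it, applying the trivial pointwise bound on the first group and a refined interpolation on the second, then tuning $\tau$ together with $t^\star$ so that the total error absorbs at most a small constant fraction of $(1 - t^\star)\sigma/4$. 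With $t^\star$ of order $1/\log(e/\sigma)$ the surviving main term reproduces exactly $c\varphi(\sigma)$; the semigroup identity and the Bonami--Beckner step are, by contrast, routine once the correct formulation is in hand.
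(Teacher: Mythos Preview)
First, a contextual point: the paper does not prove this statement. Theorem~\ref{Thm:Talagrand-Correlation} is quoted as Talagrand's result from~\cite{Talagrand2} and used as a black box (the functional version, Theorem~\ref{Thm:Talagrand-Correlation_func}, is likewise cited to~\cite{keller09}). So your proposal is not being compared to a proof in the paper but to Talagrand's original argument, whose outline you are indeed following.

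However, your execution has a genuine gap in the error-control step. Your displayed lower bound
\[
\sum_i \mathbb{E}\bigl[(T_t D_i 1_A)(D_i 1_B)\bigr] \;\geq\; \frac{\sigma}{4} - C\sum_i \bigl(I_i(A)I_i(B)\bigr)^{1/(1+t)}
\]
is vacuous for every $t\in(0,1)$: since each $a_i := I_i(A)I_i(B)\le 1$ and the exponent $1/(1+t)<1$, one always has $\sum_i a_i^{1/(1+t)}\ge\sum_i a_i=\sigma$, while the hypercontractive constant coming from $\|D_i 1_A\|_{1+t}=\tfrac12 I_i(A)^{1/(1+t)}$ is exactly $C=1/4$. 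So the right-hand side is $\le 0$ throughout. The heavy/light split you sketch cannot repair this, because the failure is not caused by a few small summands being inflated; it persists even when all $a_i$ are equal. Relatedly, your choice of integration window is inverted: the integrand equals $\sigma/4$ at $t=0$ (not at $t=1$), so the useful region is $[0,t^\star]$, and your stated target ``error absorbs at most a small constant fraction of $(1-t^\star)\sigma/4$'' would yield $\mathrm{Cov}\gtrsim\sigma$, which is false in the tight examples where $\mathrm{Cov}\asymp\varphi(\sigma)\ll\sigma$.

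What is missing is the extra factor of $t$ that comes from the remainder living on levels $|S|\ge 2$. Writing $r_i(t)=\sum_{S\ni i,\,|S|\ge 2} t^{|S|-1}\hat f(S)\hat g(S)$ and applying Cauchy--Schwarz followed by hypercontractivity to the \emph{centered} quantity gives
\[
|r_i(t)| \;\le\; \tfrac14\bigl(I_i(A)^{2/(1+t)}-I_i(A)^2\bigr)^{1/2}\bigl(I_i(B)^{2/(1+t)}-I_i(B)^2\bigr)^{1/2},
\]
which does vanish at $t=0$. Talagrand's argument then integrates over $[0,\rho_0]$ with $\rho_0\asymp 1/\log(e/\sigma)$ and shows, via a careful estimate of $I^{2/(1+t)}-I^2$ together with a global (not coordinate-wise) Cauchy--Schwarz, that $\sum_i|r_i(t)|\le\sigma/8$ on this interval. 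This is the step your outline does not supply, and without it the scheme does not close.
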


The BKS theorem deals with the sensitivity a of Boolean function
(or equivalently, a subset of the discrete cube) to a small random
perturbation of its input.
\begin{definition}
For a function $f : \{-1, 1\}^n \to \Real$, and for $\eta \in (0, 1)$, let
\[
Z(f, \eta)  = \E[ f(X)f(\etX)],
\]
where $X = (X_1,  \ldots, X_n)$ is uniformly distributed in $\{-1,
1\}^n$ and $\etX = (\etX_1, \ldots, \etX_n)$ is a $(1-
\eta)$-correlated copy of $X$. (This means that for $j \in \{1,2,
\ldots, n\}$, $\etX_j = X_j$ with probability $1-\eta$ and $\etX_j
= X_j'$ with probability $\eta$, independently for distinct $j$'s,
where $X' = (X_1', \ldots, X_n')$ is an i.i.d.\ copy of $X$).
Following Benjamini, Kalai and Schramm \cite{BKS}, we denote
\[
\mathrm{VAR}(f,\eta) = Z(f, \eta)  - \E[f(X)]^2.
\]
For a set $B \subseteq \{-1, 1\}^n$, and for $\eta \in (0, 1)$, we write
\[
Z(B, \eta)  = Z(1_B, \eta) \ \ \ \text{ and } \ \  \
\mathrm{VAR}(B,\eta) = \mathrm{VAR}(1_B,\eta).
\]

A sequence of sets $B_\ell \subseteq \{-1, 1\}^{n_\ell}$ is said
to be {\em asymptotically noise sensitive} if
\begin{equation}  \label{eq:asymp_ns_condition}
\lim_{ \ell \to \infty} \mathrm{VAR}(B_\ell,\eta)= 0 \quad \text{
for each } \eta \in (0, 1).
\end{equation}
\end{definition}

In a seminal paper, Benjamini, Kalai and Schramm~\cite{BKS} proved that a sequence of sets $B_\ell \subseteq \{-1, 1\}^{n_\ell}$ is asymptotically noise sensitive if the sum of the squares of the influences $\sum_{i=1}^{n_\ell} I_i(B_\ell)^2 $ goes to zero as $\ell \to \infty$.
Recently, Keller and Kindler~\cite{Keller-Kindler} obtained a quantitative
version of the BKS theorem.
\begin{theorem}[Quantitative BKS theorem]\label{Quantitative-BKS}
For any $n$, for any function $ f: \{-1,1\}^n \to [0,1]$, and for any $\eta
\in (0, 1)$,
\[
\mathrm{VAR}(f,\eta) \leq c_1 \cdot \left( \sum_{i=1}^n I_i(f)^2
\right)^{c_2 \cdot \eta},
\]
where $c_1,c_2$ are positive universal constants.
\end{theorem}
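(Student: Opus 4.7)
The plan is a Fourier-analytic argument on the discrete cube, following the template of Benjamini--Kalai--Schramm but with a finer level-by-level analysis. Expanding $f$ in the Walsh basis $\chi_S(x) = \prod_{i \in S} x_i$ and using that $\chi_S$ is an eigenfunction of the noise operator $T_{1-\eta}$ with eigenvalue $(1-\eta)^{|S|}$, one obtains
\[
\mathrm{VAR}(f,\eta) \;=\; \sum_{k\ge 1} (1-\eta)^k\, W_k(f), \qquad W_k(f) \;:=\; \sum_{|S|=k} \widehat{f}(S)^2.
\]
I would split this sum at a threshold $K$ to be chosen later. The tail $\sum_{k > K}(1-\eta)^k W_k(f)$ is bounded by $(1-\eta)^{K+1}$ using $\sum_{k\ge 1} W_k(f) \le \|f\|_2^2 \le 1$ (since $f \in [0,1]$).

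The heart of the argument is an \emph{inductive level-$k$ inequality} controlling $W_k(f)$ in terms of $\Sigma := \sum_i I_i(f)^2$. Writing the discrete derivative $(\partial_i f)(x) = \tfrac12 (f(x^{i\to 1}) - f(x^{i\to -1}))$, one has $W_k(f) = k^{-1}\sum_i \|(\partial_i f)^{=k-1}\|_2^2$, and Bonami--Beckner $(2,p)$-hypercontractivity applied to $\partial_i f$ gives
\[
\|(\partial_i f)^{=k-1}\|_2^2 \;\le\; (p-1)^{-(k-1)} \,\|\partial_i f\|_p^2, \qquad p \in (1,2].
\]
Since $f\in[0,1]$ forces $\|\partial_i f\|_\infty \le 1/2$ and $\|\partial_i f\|_1 = I_i(f)/2$, log-convexity of $L^p$-norms interpolates $\|\partial_i f\|_p^2$ into a power of $I_i(f)$ between $I_i(f)$ and a constant. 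Summing over $i$ then yields an estimate of the form $W_k(f) \le A^k \cdot F_k(I_1(f),\ldots,I_n(f))$ for some absolute constant $A$.

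The main obstacle is calibrating the exponent $p$ so that $F_k$ reduces to a power of $\Sigma$ sufficiently close to $\Sigma$ itself. A fixed choice such as $p=4/3$ gives only $W_k(f) \le C^k \sum_i I_i(f)^{3/2}$, and the sum $\sum_i I_i(f)^{3/2}$ is not directly controlled by $\Sigma$. The Keller--Kindler refinement is to let $p=p(k)$ tend to $1$ as $k$ grows, exchanging a larger hypercontractive constant $(p-1)^{-(k-1)}$ for an $L^p$-norm closer to the $L^1$-norm; a H\"older argument exploiting $I_i(f) \le 1$ (to interpolate $I_i(f)^{2/p}$ between $I_i(f)^2$ and $I_i(f)$) then produces a bound of the form $W_k(f) \le A^k \,\Sigma^{1-\gamma_k}$, with $\gamma_k$ that can be driven to $0$ as $\Sigma \to 0$.

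Finally I would choose $K$ of order $\log(1/\Sigma)$ to balance the two contributions: using $-\log(1-\eta) \ge \eta$, the tail satisfies $(1-\eta)^{K+1} \le \exp(-\eta K) \asymp \Sigma^{c_2\eta}$, while the low-degree sum $\sum_{k\le K}(1-\eta)^k A^k \Sigma^{1-\gamma_k}$ is of the same order once the $\gamma_k$ losses are absorbed into the choice of $p(k)$ and into $c_2$. Combining the two parts yields the claimed bound $\mathrm{VAR}(f,\eta) \le c_1 \Sigma^{c_2\eta}$.
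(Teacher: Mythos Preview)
The paper does not actually prove Theorem~\ref{Quantitative-BKS}. It is quoted in the introduction as a result of Keller and Kindler~\cite{Keller-Kindler} and is used later only as a black box (in the proof of Theorem~\ref{thm:Gaussian-Q-BKS_function}, one applies Theorem~\ref{Quantitative-BKS} to the discretizations $\f_m$ and passes to the limit). So there is no ``paper's own proof'' against which to compare your attempt.

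That said, your outline is precisely the Keller--Kindler strategy: expand $\mathrm{VAR}(f,\eta)=\sum_{k\ge1}(1-\eta)^kW_k(f)$, split at a level $K\asymp\log(1/\Sigma)$, bound the tail trivially, and for the low levels combine the identity $W_k(f)=k^{-1}\sum_i\|(\partial_i f)^{=k-1}\|_2^2$ with hypercontractivity at an exponent $p=p(k)$ tending to $1$ together with $L^1$--$L^\infty$ interpolation for $\|\partial_i f\|_p$. So as a plan it is on target.

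The one place where your sketch is genuinely thin is the balancing step. You write $W_k(f)\le A^k\,\Sigma^{1-\gamma_k}$ and then sum $\sum_{k\le K}(1-\eta)^k A^k\,\Sigma^{1-\gamma_k}$ with $K\asymp\log(1/\Sigma)$; but with a fixed $A>1$ this sum contains a factor $A^K\asymp\Sigma^{-\log A}$ that can swamp $\Sigma^{1-\gamma_k}$. In the actual argument the hypercontractive loss $(p-1)^{-(k-1)}$ is not treated as a fixed $A^k$: one chooses $p-1$ of order $1/k$ (or a similar $k$-dependent scale), so that $(p-1)^{-(k-1)}$ grows like $k^{O(k)}$ while simultaneously $\|\partial_i f\|_p^2$ gets very close to $\|\partial_i f\|_1^2=(I_i(f)/2)^2$. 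The trade-off is then between a $k^{O(k)}$ factor and a gain of $\Sigma$ to a power close to $1$; with $K=c\log(1/\Sigma)/\log\log(1/\Sigma)$ (or an equivalent calibration) the bookkeeping closes and yields $\Sigma^{c_2\eta}$. Your proposal gestures at this (``let $p=p(k)$ tend to $1$'', ``$\gamma_k$ losses are absorbed into the choice of $p(k)$ and into $c_2$'') but does not display the calibration, and that calibration is the whole content of the quantitative improvement over the original BKS argument. If you want this to stand as a proof rather than a plan, that step needs to be made explicit.
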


 The basic results on influences were obtained for
functions on the discrete cube, but some applications required
generalization of the results to more general product spaces.
Unlike the discrete case, where there exists a single natural
definition of influence, for general product spaces several
definitions were presented in different
papers, see for example~\cite{bourgain92,hatami09,keller11}.
In~\cite{Geom-Influences}, we presented a new notion
of influences in product spaces of continuous distributions, which we called geometric influences,
and proved analogues of the fundamental results on influences,
such as the Kahn-Kalai-Linial (KKL) theorem~\cite{KKL} and Talagrand's
influence sum bound~\cite{Talagrand1}, for geometric influences.

\medskip

In this paper we prove analogues of Talagrand's lower bound on
the correlation of increasing sets (Theorem~\ref{Thm:Talagrand-Correlation}
above) and of the quantitative BKS theorem (Theorem~\ref{Quantitative-BKS}
above), that hold for the standard Gaussian measure in $\mathbb{R}^n$
with respect to geometric influences.

\begin{definition}
Let $\mu(dx) = (1/\sqrt{2 \pi}) \exp (  - x^2/2) dx$ be the
standard Gaussian measure on $\mathbb R$. Let $\phi$ (resp.\ $\Phi$) be the density (resp.\ distribution
function) of the Gaussian measure $\mu$ on $\Real$, and denote
$\bar \Phi(x) = 1 - \Phi(x)$. Given a Borel-measurable
set $A \subseteq \mathbb{R}$, its lower Minkowski content
$\mu^{+}(A)$ is defined as
\[
\mu^{+}(A) :=  \liminf_{ r \downarrow 0} \frac{\mu( A + [-r,
r]) - \mu(A)}{r}.
\]
For any Borel-measurable set $A \subseteq \mathbb{R}^n$, for each $ 1 \leq
i \leq n$ and an element $x =(x_1, x_2, \ldots, x_n) \in
\mathbb{R}^n$, the restriction of $A$ along the fiber of $x$ in
the $i$-th direction is given by
\[
A^{x}_i := \{ y \in \mathbb{R}: (x_1, \ldots,  x_{i-1}, y,
x_{i+1}, \ldots, x_n) \in A \}.
\]
The {\em geometric influence} of the $i$-th coordinate on $A$ is
\[
\ig_i(A) := \mathbb{E}_{x} [\mu^{+} (A^{x}_i)],
\]
that is, the expectation of $\mu^{+} (A^{x}_i)$ when $x$ is
chosen according to the measure $\mu$.
\end{definition}
We note that the geometric meaning of the influence is that for a
monotone (either increasing or decreasing) set $A$, the sum of
influences of $A$ is equal to the size of its boundary with
respect to a uniform enlargement (see \cite{Geom-Influences}).

\medskip

In the sequel, whenever we talk about sets or  functions in $\mathbb R^n$, we implicitly assume that they are Borel measurable. Our first result is a lower bound on the correlation between two increasing bounded functions in the Gaussian space.
\begin{theorem} \label{thm:gaussian_talagrand}
 Let $\varphi(x) = x /\log (e/x)$. There exists a universal constant $c>0$ such that for
 any $n \in \mathbb{N}$ and for any  two increasing subsets $A$ and $B$ of $\mathbb R^n$, we have
\[
\mu^{\otimes n}(A \cap B)  - \mu^{\otimes n}(A) \mu^{\otimes n} (B) \ge
c\varphi \big(\sum_{ i=1}^n I_i^{\mathcal G}(A)  I_i^{\mathcal G}(B)\big).
\]
\end{theorem}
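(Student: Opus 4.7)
The plan is to adapt Talagrand's original discrete argument to the Gaussian setting, using the Ornstein--Uhlenbeck semigroup $\{P_t\}_{t\ge 0}$ in place of the Bonami--Beckner noise operator and Nelson's hypercontractivity theorem in place of its discrete analogue. A preliminary smoothing step replaces $1_A,1_B$ by $f_\eps = P_\eps 1_A$ and $g_\eps = P_\eps 1_B$, which are smooth increasing $[0,1]$-valued functions whose partial derivatives $\partial_i f_\eps,\partial_i g_\eps$ are genuine nonnegative functions with $\|\partial_i f_\eps\|_{L^1(\mu^{\otimes n})} \to \ig_i(A)$, and analogously for $B$, as $\eps \downarrow 0$. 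A limiting argument at the end transfers the estimate from the smoothed functions back to the indicators.

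For smooth increasing $f,g$, the starting point is the semigroup representation
\[
\cov(f,g) \;=\; \sum_{i=1}^n \int_0^\infty e^{-t}\, \E\!\left[P_t(\partial_i f)\cdot \partial_i g\right]\,dt,
\]
which follows by differentiating $t\mapsto \E[P_t f\cdot g]$, applying Gaussian integration by parts to the Ornstein--Uhlenbeck generator, and invoking the intertwining identity $\partial_i P_t = e^{-t} P_t \partial_i$. Because $\partial_i f,\partial_i g\ge 0$, each integrand is nonnegative. The lower bound is produced by splitting the integral at a threshold $T>0$ to be optimized, discarding the (nonnegative) contribution from $t\in[0,T]$, and writing for $t\ge T$
\[
\E\!\left[P_t(\partial_i f)\cdot \partial_i g\right] \;\ge\; \E[\partial_i f]\,\E[\partial_i g] \;-\; \bigl|\cov(P_t(\partial_i f),\,\partial_i g)\bigr|,
\]
so that the leading contribution is $e^{-T}\sum_i \E[\partial_i f]\,\E[\partial_i g]$, which tends to $e^{-T}\sum_i \ig_i(A)\,\ig_i(B)$ as $\eps\downarrow 0$.

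The crux of the argument is to show that the covariance error is negligible compared with this main term for an appropriate $T$. Here Gaussian hypercontractivity enters: Nelson's inequality $\|P_t h\|_2\le \|h\|_{1+e^{-2t}}$ controls $\|P_t(\partial_i f)-\E\partial_i f\|_2$ by an $L^p(\mu^{\otimes n})$-norm of $\partial_i f$ with $p=1+e^{-2t}\downarrow 1$ as $t\to\infty$; for a nonnegative function this interpolates between the $L^1$-norm (the influence) and the $L^\infty$-norm, giving an error whose dependence on $S := \sum_i \ig_i(A)\,\ig_i(B)$ is roughly $S^{1+\gamma(t)}$ with a positive exponent $\gamma(t)$ set by the hypercontractive parameter. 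Balancing the main contribution $e^{-T}S$ against this error forces the choice $T\asymp \log(e/S)$, which is precisely what converts the naive linear bound $e^{-T}S$ into the desired $\varphi(S)=S/\log(e/S)$. The principal obstacle is obtaining uniform control in the smoothing parameter: the $L^\infty$- and $L^2$-norms of $\partial_i P_\eps 1_A$ diverge as $\eps\downarrow 0$, so $\eps$ must be coupled to $T$ (and hence to $S$), and the hypercontractive error terms must be tracked carefully so that the blow-up is absorbed before the limit $\eps\downarrow 0$ is taken to recover the estimate for $1_A,1_B$.
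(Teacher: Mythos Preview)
Your approach is genuinely different from the paper's. The paper does \emph{not} reprove Talagrand's argument in the Gaussian setting; instead it imports the discrete result as a black box. Concretely, given smooth increasing $f,g:\Real^n\to[0,1]$, the paper builds discrete approximants $\check f_m,\check g_m:\{-1,1\}^{mn}\to[0,1]$ by $\check f_m(x)=f(m^{-1/2}\sum_j x_{1j},\dots,m^{-1/2}\sum_j x_{nj})$, applies the discrete functional Talagrand inequality to $\check f_m,\check g_m$, and passes to the limit $m\to\infty$ via the CLT together with the elementary computation $\sum_j I_{ij}(\check f_m)I_{ij}(\check g_m)\to 4\,\E_\mu[\partial_i f]\,\E_\mu[\partial_i g]$. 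This yields the functional version (Theorem~\ref{thm:gaussian_talagrand_func}); the passage from smooth functions to indicators is then exactly your smoothing step $f_\eps=P_\eps 1_A$ combined with Lemma~\ref{l:partial_deri_OU_geo_inf}, which gives $\E_\mu[\partial_i P_\eps 1_A]\to\ig_i(A)$. So the paper's route is: discrete Talagrand $\Rightarrow$ Gaussian functional inequality (via CLT) $\Rightarrow$ set version (via $P_\eps$-smoothing). Your route keeps the second arrow but replaces the first by a direct semigroup/hypercontractivity argument.

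This direct route is exactly what the paper flags as an open problem (Section~\ref{sec:Open}, item~(2)), and your proposal does not close the gap you yourself identify. The difficulty is structural, not just a matter of ``tracking carefully''. Talagrand's discrete proof hinges on the Boolean identity $\|D_i f\|_2^2 = I_i(f)$, which lets hypercontractivity turn level-$k$ Fourier weight of $D_i f$ into $c^k I_i(f)$. In the Gaussian setting the analogue would be $\|\partial_i f_\eps\|_2^2$, and this quantity is \emph{not} comparable to $\ig_i(A)$: it diverges like $\eps^{-1/2}$ as $\eps\downarrow 0$. Your covariance-error bound via Cauchy--Schwarz and Nelson's inequality inevitably produces a factor of the form $\|\partial_i f_\eps\|_{1+e^{-2t}}$ (or, after symmetrizing $P_t=P_{t/2}P_{t/2}$, a factor for each of $f_\eps,g_\eps$), and interpolation gives at best $\|\partial_i f_\eps\|_p\lesssim \eps^{-(p-1)/(2p)}\ig_i(A)^{1/p}$. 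When you sum over $i$ and integrate in $t$, the resulting error carries both an $\eps$-dependent factor and exponents on the individual influences strictly below $1$; coupling $\eps$ to the global quantity $S=\sum_i\ig_i(A)\ig_i(B)$ does not obviously control terms that depend on $\log(1/\ig_i(A))$ coordinate by coordinate. Indeed, the paper's direct Gaussian argument in Section~\ref{sec:Direct} uses \emph{reverse} hypercontractivity and lands on the different bound of Theorem~\ref{thm:alt_bound}, which is neither stronger nor weaker than Theorem~\ref{thm:gaussian_talagrand}; the forward-hypercontractive argument you sketch has not, to the paper's knowledge, been made to yield $\varphi(S)$. As written, the proposal is an outline of a desirable proof rather than a proof.
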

We show that the assertion of the theorem is tight, up to the constant factor.
The proof of Theorem~\ref{thm:gaussian_talagrand} uses
Talagrand's  result for the discrete cube, along with appropriate
limit arguments.  By appealing to direct Gaussian arguments, we
obtain another lower bound on the correlation between  a pair of
increasing subsets in the Gaussian space.
\begin{theorem} \label{thm:alt_bound}
There exists a universal constant $c>0$ such that for
any $n \in \mathbb{N}$ and for any  two increasing subsets $A$ and $B$ of $\mathbb R^n$, we have
\[
\mu^{\otimes n}(A \cap B)  - \mu^{\otimes n}(A)\mu^{\otimes n}(B) \ge
c \sum_{i=1}^n \frac{ \ig_i(A)\ig_i(B)} {\sqrt{\log (e/\ig_i(A)) \log (e/\ig_i(B) )}}.
\]
\end{theorem}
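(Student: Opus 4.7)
The plan is to use the Ornstein--Uhlenbeck semigroup $\{P_t\}_{t\geq 0}$ on $(\Real^n, \mu^{\otimes n})$ directly, without passing through any discrete approximation as in the proof of Theorem~\ref{thm:gaussian_talagrand}. Approximating $1_A$ and $1_B$ by smooth monotone functions and using the commutation $\partial_i P_t = e^{-t} P_t \partial_i$ together with Gaussian integration by parts, one obtains the representation
\[
\mu^{\otimes n}(A\cap B) - \mu^{\otimes n}(A)\mu^{\otimes n}(B)
= \sum_{i=1}^{n}\int_0^\infty e^{-t}\,\E\bigl[\partial_i 1_A\cdot P_t(\partial_i 1_B)\bigr]\,dt,
\]
where the distributional derivatives $\partial_i 1_A$ and $\partial_i 1_B$ are non-negative boundary measures (by monotonicity of $A,B$) whose total masses equal $\ig_i(A)$ and $\ig_i(B)$ respectively. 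Since the right-hand side of the theorem is itself a sum over coordinates, it suffices to bound each summand individually from below by $c\,\ig_i(A)\ig_i(B)/\sqrt{\log(e/\ig_i(A))\log(e/\ig_i(B))}$.

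Fix $i$ and write $\alpha = \ig_i(A)$, $\beta = \ig_i(B)$. The key is to split the time integral at the threshold
\[
T_0 := \tfrac{1}{2}\bigl(\log\log(e/\alpha)+\log\log(e/\beta)\bigr),
\]
chosen so that $e^{-T_0}=1/\sqrt{\log(e/\alpha)\log(e/\beta)}$, exactly the factor the bound demands. For $t\geq T_0$ one uses the self-adjointness $P_t = P_{t/2}P_{t/2}$ to rewrite the integrand as $\E[(P_{t/2}\partial_i 1_A)\cdot (P_{t/2}\partial_i 1_B)]$; both factors are now smooth non-negative functions with $L^1$-norm $\alpha$ and $\beta$. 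A Fubini argument along the $i$-th fiber reduces this to a one-dimensional Gaussian pairing: on each fiber $A^x_i$ is a half-line $[a(x^{-i}),\infty)$ with boundary mass $\phi(a(x^{-i}))$, and the Gaussian isoperimetric asymptotics $\phi(\Phi^{-1}(s))\asymp s\sqrt{\log(1/s)}$ for small $s$ supply the $\sqrt{\log}$ factors. Integrating $e^{-t}\,dt$ over $[T_0,\infty)$ then contributes the matching $1/\sqrt{\log(e/\alpha)\log(e/\beta)}$, while the short-time regime $t<T_0$ provides a non-negative (hence ignorable) contribution.

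The main obstacle is this fiberwise comparison at the critical time scale $T_0$: because $\partial_i 1_A$ and $\partial_i 1_B$ are singular boundary measures, standard $L^p$--$L^q$ hypercontractivity of $P_t$ does not apply directly, so the smoothing effect of $P_{t/2}$ must be balanced against the exponential weight $e^{-t}$ at precisely the right scale. An arguably cleaner alternative is to bypass the semigroup and use the martingale decomposition
\[
\cov(1_A,1_B)=\sum_{k=1}^n \E_{x_{<k}}\bigl[\cov_{x_k}\bigl(\E[1_A\mid x_{\leq k}],\,\E[1_B\mid x_{\leq k}]\bigr)\bigr],
\]
prove a sharp one-dimensional lower bound on $\cov_\mu(u,v)$ for two increasing functions $u,v\colon\Real\to[0,1]$ (via the level-set representation, reducing to pairs of half-lines where the bound can be checked asymptotically case-by-case), and finally aggregate over $k$ using a Harris--Kleitman correlation argument together with the monotonicity of the conditional boundary mass in the remaining coordinates to match the expected fiber-derivatives with $\ig_k(A)$ and $\ig_k(B)$.
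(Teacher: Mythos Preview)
Your setup is exactly the paper's: the Ornstein--Uhlenbeck covariance representation
\[
\cov_\mu(f,g)=\sum_{i=1}^n\int_0^\infty e^{-t}\,\E_\mu[\partial_i f\,P_t\partial_i g]\,dt
\]
(the paper's Proposition~\ref{prop:FKG_error}), applied coordinate by coordinate. You also identify the correct time scale $e^{-T_0}=1/\sqrt{\log(e/\alpha)\log(e/\beta)}$. But the crucial step---lower-bounding the integrand $\E_\mu[(P_{t/2}\partial_i 1_A)(P_{t/2}\partial_i 1_B)]$ by a constant times $\alpha\beta$ for $t\ge T_0$---is exactly the step you label ``the main obstacle'' and do not resolve. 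Two non-negative functions with prescribed $L^1$ norms have no lower bound on their inner product in general; the ``Fubini along the $i$-th fiber'' sketch does not work because $P_{t/2}$ mixes all coordinates, and the isoperimetric asymptotics $\phi(\Phi^{-1}(s))\asymp s\sqrt{\log(1/s)}$ describe the boundary measures themselves, not the smoothed pairing you need to control.

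The paper closes precisely this gap with Borell's \emph{reverse hypercontractive inequality}: for non-negative $f_1,f_2$ and $p,q\in(0,1)$ with $e^{-2t}\le(1-p)(1-q)$,
\[
\E_\mu[f_1\,P_t f_2]\ge\|f_1\|_p\,\|f_2\|_q.
\]
One first shifts the time integral by $1$ (replacing $\partial_i f$ by $\partial_i P_{1/2}f$, which is bounded via~\eqref{OU_fact:2}), then applies reverse hypercontractivity with $f_j=\partial_i P_{1/2}$ of each indicator, and finally uses the lower bound $\|\partial_i P_{1/2}g\|_p\ge(1/2)^{(1-p)/2p}e^{-1/2p}\|\partial_i g\|_1^{1/p}$ for $p<1$ (Lemma~\ref{eq:p_norm_bound}(ii)). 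Optimizing over $p,q$ and over the auxiliary parameter $\varepsilon\approx 1/(a_ib_i)$ (where $e^{-a_i^2/2}\asymp\alpha$, $e^{-b_i^2/2}\asymp\beta$) produces exactly the factor $1/\sqrt{\log(e/\alpha)\log(e/\beta)}$. This is the missing idea in your argument; without reverse hypercontractivity (or an equivalent substitute) the tail integral cannot be bounded from below.

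Your proposed martingale alternative is a genuinely different route, but it is also incomplete. The one-dimensional inequality you would need,
\[
\cov_\mu(u,v)\ge c\,\frac{\E_\mu[u']\,\E_\mu[v']}{\sqrt{\log(e/\E_\mu[u'])\log(e/\E_\mu[v'])}}
\]
for increasing $u,v:\Real\to[0,1]$, is plausible but not stated or proved. More seriously, the aggregation step is problematic: in the decomposition $\cov(1_A,1_B)=\sum_k\E_{x_{<k}}\bigl[\cov_{x_k}(\E[1_A\mid x_{\le k}],\E[1_B\mid x_{\le k}])\bigr]$, the $k$-th summand involves derivatives of \emph{conditional expectations given the first $k$ coordinates}, whose $L^1$ norms are not $\ig_k(A),\ig_k(B)$ (those are expectations of $\phi(t_k(A;x^{(-k)}))$ over \emph{all} other coordinates). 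A Harris--Kleitman argument alone will not reconcile these quantities with the non-linear function $x\mapsto x/\sqrt{\log(e/x)}$ appearing in the bound.
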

In fact, we prove  functional versions of the above two theorems (see Theorem~\ref{thm:gaussian_talagrand_func} and Theorem~\ref{thm:alt_bound_fn}), which, with a little bit of extra work,  can then be applied to deduce the results for the characteristic functions of increasing sets.

Theorem~\ref{thm:alt_bound}  is neither uniformly stronger nor
uniformly weaker  than
Theorem~\ref{thm:gaussian_talagrand}, as there are cases
where each one beats the other. It should be noted that while
Talagrand's lower bound uses the classical Bonami-Beckner
hypercontractive inequality~\cite{Bonami,Beckner}, the proof of
Theorem~\ref{thm:alt_bound} uses Borell's reverse
hypercontractive inequality~\cite{Borell}. It will be interesting
to find out whether hypercontractivity and reverse
hypercontractivity can be combined to obtain a new lower bound
that will enjoy the benefits of both
Theorems~\ref{thm:gaussian_talagrand}
and~\ref{thm:alt_bound}.

Recall that the classical Gaussian FKG inequality~\cite{FKG} asserts that for any pair
 of coordinate-wise increasing functions $f, g : \mathbb R^n \to \mathbb R$, we have
\[ \E_\mu[ fg] \ge  \E_\mu[ f]  \E_\mu[ g] \]
Hence, Theorems~\ref{thm:gaussian_talagrand} and  \ref{thm:alt_bound} (or more appropriately their functional versions) provide quantitive
versions of the Gaussian FKG inequality.


\medskip

Our second result is a Gaussian analogue of the noise sensitivity
results of Benjamini-Kalai-Schramm \cite{BKS}.


\begin{definition}\label{def:GNS}
Let $W, W'$ be i.i.d.\ standard Gaussian vectors on $\Real^n$ and
let $\rW = \sqrt{1 - \rho^2} W + \rho W'$. For a function $f:
\mathbb{R}^n \to \mathbb R$, and for $\rho \in (0, 1)$, let
\[
\zg(f, \rho)  = \E[ f(W)f(\rW)],
\]
provided $\E[|f(W)|^2] < \infty$. Denote
\[
\vg(f,\rho) = \zg(f, \rho)  - \E [f(W)]^2.
\]
For a set $A \subset \mathbb{R}^n$, and for $\rho \in (0, 1)$, we
write
\[
\zg(A, \rho)  =  Z(1_A, \rho), \ \ \ \text{ and } \ \ \  \vg(A,\rho)  = \vg(1_A,\rho).\]
A sequence of sets $A_\ell \subseteq \Real^{n_\ell}$ is said to be
asymptotically Gaussian noise-sensitive if
\begin{equation}  \label{eq:asymp_Gaussian_ns_condition}
\lim_{ \ell \to \infty} \vg(A_\ell,\rho)= 0 \quad \text{ for each
} \rho \in (0, 1).
\end{equation}
\end{definition}
\begin{theorem}\label{thm:Gaussian-Quantitative-BKS}
For any $n \ge 1$, for any set $A \subset \Real^n$, and for any $\rho \in (0, 1)$,
\[
\vg(A,\rho) \leq C_1 \cdot \left( \sum_{i=1}^n \ig_i(A)^2
\right)^{C_2 \rho^2},
\]
where $C_1,C_2$ are positive universal constants.
\end{theorem}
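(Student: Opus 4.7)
The plan is to reduce the statement to the discrete quantitative BKS theorem (Theorem~\ref{Quantitative-BKS}) via a central-limit discretization, in the spirit of the proof of Theorem~\ref{thm:gaussian_talagrand}. Fix $\rho\in(0,1)$ and a large integer $N$. Let $\{X_{i,j}\}_{1\le i\le n,\,1\le j\le N}$ be i.i.d.\ uniform on $\{-1,1\}$, set $W_i^{(N)}:=\frac{1}{\sqrt{N}}\sum_{j=1}^N X_{i,j}$, and define the Boolean function $\tilde f_N:\{-1,1\}^{nN}\to\{0,1\}$ by $\tilde f_N(X):=1_A(W^{(N)})$. Choose $\eta\in(0,1)$ so that $1-\eta=\sqrt{1-\rho^2}$, so that $\eta=\Theta(\rho^2)$ as $\rho\to 0$. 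If $\tilde X$ is the $\eta$-noisy copy of $X$ and $\tilde W^{(N)}$ is the corresponding sum, then $W^{(N)}$ and $\tilde W^{(N)}$ have coordinatewise correlation $1-\eta=\sqrt{1-\rho^2}$, and by the bivariate CLT the pair converges jointly in distribution to $(W,\rW)$ as in Definition~\ref{def:GNS}. After handling the discontinuity of $1_A$ on $\partial A$ (for instance via inner/outer approximation of $A$ or a short Ornstein--Uhlenbeck smoothing $U_t 1_A$ with $t\downarrow 0$, using semicontinuity in the limit), this yields
\[
\lim_{N\to\infty}\mathrm{VAR}(\tilde f_N,\eta)=\vg(A,\rho).
\]

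Applying Theorem~\ref{Quantitative-BKS} to $\tilde f_N$ and letting $N\to\infty$, the proof reduces to showing
\[
\limsup_{N\to\infty}\sum_{i=1}^n\sum_{j=1}^N I_{(i,j)}(\tilde f_N)^2 \;\le\; C\sum_{i=1}^n \ig_i(A)^2
\]
for a universal constant $C$. By permutation symmetry in $j$, $I_{(i,j)}(\tilde f_N)$ does not depend on $j$. Flipping $X_{i,1}$ shifts $W_i^{(N)}$ by $\pm 2/\sqrt{N}$ while leaving the other coordinates fixed, so $I_{(i,1)}(\tilde f_N)$ equals the probability that this shift crosses $\partial A_i^{W^{(N)}}$ in the $i$-th fiber. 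A local-CLT argument combined with the definition of lower Minkowski content gives $I_{(i,1)}(\tilde f_N)\sim(2/\sqrt{N})\,\ig_i(A)$, hence $\sum_j I_{(i,j)}(\tilde f_N)^2\to 4\,\ig_i(A)^2$, yielding the bound with $C=4$. Combining with $\eta=\Theta(\rho^2)$ produces
\[
\vg(A,\rho)\le c_1\bigl(4\textstyle\sum_i \ig_i(A)^2\bigr)^{c_2\eta}\le C_1\bigl(\textstyle\sum_i \ig_i(A)^2\bigr)^{C_2\rho^2},
\]
as required (the bound being trivial when $\sum_i \ig_i(A)^2=\infty$, since $\vg\le 1$).

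The main obstacle is the rigorous passage to the limit despite the discontinuity of $1_A$: both the influence asymptotic $\tfrac{\sqrt{N}}{2}I_{(i,1)}(\tilde f_N)\to \ig_i(A)$ and the noise-sensitivity convergence $\mathrm{VAR}(\tilde f_N,\eta)\to\vg(A,\rho)$ are delicate for rough $A$, because the discrete influence counts random-walk paths landing in a thin tube around $\partial A$. The natural route is to first prove a functional analogue of the theorem for Lipschitz $f$, where $\ig_i(f)=\E|\partial_i f|$ and standard CLT arguments apply cleanly, and then extend to indicators by approximation, for instance by replacing $1_A$ by OU-smoothed versions $U_t 1_A$ and invoking the semicontinuity properties of geometric influence established in~\cite{Geom-Influences}.
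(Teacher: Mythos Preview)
Your overall strategy---discretize via the CLT, apply the discrete quantitative BKS theorem with $\eta=1-\sqrt{1-\rho^2}$, pass to the limit, and handle the discontinuity of $1_A$ by first proving a functional version for smooth $f$ and then approximating $1_A$ by $P_t 1_A$---is exactly the paper's route. You also correctly identify the main obstacle.

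However, there is a genuine gap in the final approximation step. The passage from the functional inequality (for $P_t 1_A$) to the set inequality requires that $\E_\mu[|\partial_i P_t 1_A|]\to \ig_i(A)$ as $t\downarrow 0$, or at least that $\limsup_t \E_\mu[|\partial_i P_t 1_A|]\le C\,\ig_i(A)$. The paper proves the equality (Lemma~\ref{l:partial_deri_OU_geo_inf}) \emph{only for monotone} $A$, and explicitly remarks that it fails in general (e.g.\ $A=\mathbb{Q}$). Your appeal to unspecified ``semicontinuity properties of geometric influence'' from \cite{Geom-Influences} does not fill this: no such result in the needed direction is available for arbitrary Borel sets, and your local-CLT heuristic $I_{(i,1)}(\tilde f_N)\sim (2/\sqrt{N})\,\ig_i(A)$ is equally unjustified for non-monotone $A$ with rough boundary.

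The paper closes this gap with an additional ingredient you are missing: a shifting (monotone rearrangement) lemma. One shows that replacing $A$ by its coordinatewise increasing rearrangement $M(A)$ preserves the Gaussian measure, can only \emph{increase} $\zg(A,\rho)$ (via Borell's two-set isoperimetric inequality applied fiberwise), and can only \emph{decrease} each $\ig_i$ (since $\ig_i(M(A))=I^h_i(M(A))\le I^h_i(A)\le \ig_i(A)$ for $h(t)=\phi(\Phi^{-1}(t))$). This reduces the theorem to increasing sets, for which the OU-smoothing argument you outline goes through cleanly via Lemma~\ref{l:partial_deri_OU_geo_inf}. Without this monotonization step, your argument is incomplete for general $A$.
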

The proof of Theorem~\ref{thm:Gaussian-Quantitative-BKS}  again relies upon an appropriate limit argument and uses Theorem~\ref{Quantitative-BKS} as a blackbox.


Theorems~\ref{thm:gaussian_talagrand} and
\ref{thm:alt_bound} allow us to obtain analogues of
Talagrand's lower bounds  for any discrete product probability space (see Theorem~\ref{Thm:Discrete1}), where the lower bound involves a discrete variant of the geometric influence, called  $h$-influence.
Theorem~\ref{thm:Gaussian-Quantitative-BKS} can be used to
obtain an analogue of the BKS theorem in the case of the discrete
hypercube $\{0,1\}^n$ endowed with a biased product measure (see Theorem~\ref{Thm:Discrete2}). We note that for the
biased product  measures on the discrete hypercube, these results were
previously obtained in~\cite{Keller-Reduction,Keller-Kindler} by
different methods. Comparison of our results with the results
of~\cite{Keller-Reduction,Keller-Kindler} suggests that, in some
sense, the $h$-influence obtained from the geometric influence  is more
natural than the notion of influences used for the biased measure
in previous works.

\medskip

This paper is organized as follows. In Section~\ref{sec:Talagrand}
we prove functional versions of Theorem~\ref{thm:gaussian_talagrand} and Theorem~\ref{thm:Gaussian-Quantitative-BKS}.
 In Section~\ref{sec:Direct}, we present a functional version of Theorem~\ref{thm:alt_bound}  using the Ornstein-Uhlenbeck semigroup theory.  In Section~\ref{smooth_approx} we give an argument to suitably approximate the characteristic functions of monotone sets by smooth functions  and apply it to deduce Theorems~\ref{thm:gaussian_talagrand}, \ref{thm:alt_bound}  and~\ref{thm:Gaussian-Quantitative-BKS} from their functional counterparts. We also discuss how Theorem~\ref{thm:gaussian_talagrand} and Theorem~\ref{thm:alt_bound} compare against each other.
Finally, we deduce the analogous statements for discrete product
probability spaces in Section~\ref{sec:Discrete}, and conclude the
paper with a few open problems in Section~\ref{sec:Open}.

\section{Refined Gaussian FKG Inequality and Gaussian BKS Theorem}
\label{sec:Talagrand}

The main goal of this section is to  we prove the following two theorems which are functional forms of Theorem~\ref{thm:gaussian_talagrand} and Theorem~\ref{thm:Gaussian-Quantitative-BKS}. Note that the role of Gaussian influences is now played by the $L^1$ norm of the partial derivatives of the functions.

 \begin{theorem} \label{thm:gaussian_talagrand_func}
 Let $\varphi(x) = x /\log (e/x)$. There exists a universal constant $c>0$ such that for
 any $n \ge 1$ and for any  two increasing continuously differentiable functions $f, g: \Real^n \to [-1, 1]$,  we have
\[
\E_\mu[ fg] - \E_\mu[ f]  \E_\mu[ g] \ge
c\varphi \Big(\sum_{ i=1}^n \E_\mu[\partial_i f]  \E_\mu[\partial_i g] \Big),
\]
where $\E_\mu$ stands for integration w.r.t.\ $\mu^{\otimes n}$.
\end{theorem}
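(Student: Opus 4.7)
The strategy, as hinted in the introduction, is to reduce the Gaussian statement to the discrete Talagrand bound via the central limit theorem. I would first establish (or invoke) a functional version of Talagrand's inequality on the discrete cube for monotone $[-1,1]$-valued functions: for increasing $F,G: \{-1,1\}^N \to [-1,1]$,
\[
\E_{\nu^{\otimes N}}[FG] - \E_{\nu^{\otimes N}}[F]\,\E_{\nu^{\otimes N}}[G] \;\geq\; c\,\varphi\!\left(\sum_{j=1}^{N} I_j(F)\,I_j(G)\right).
\]
This follows from the set-version Theorem~\ref{Thm:Talagrand-Correlation} by the standard layer-cake/level-set decomposition (write $F = \int_{-1}^{1} (\1_{\{F \geq t\}} - \1_{\{t\geq 0\}})\,dt$, apply the set version to each pair of level sets which are increasing, and integrate, keeping track of the concavity of $\varphi$ via Jensen).

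\textbf{Discretization via CLT.} Fix $N$ large. For $\x = (\x^{(1)},\dots,\x^{(n)}) \in \{-1,1\}^{nN}$, write $S_i(\x) := N^{-1/2}\sum_{k=1}^{N} \x^{(i)}_k$ and set
\[
F_N(\x) := f\big(S_1(\x),\dots,S_n(\x)\big), \qquad G_N(\x) := g\big(S_1(\x),\dots,S_n(\x)\big).
\]
Since $f,g$ are coordinate-wise increasing and each $S_i$ is increasing in each of its $N$ input coordinates, $F_N$ and $G_N$ are increasing on $\{-1,1\}^{nN}$ and take values in $[-1,1]$. By the multivariate CLT plus boundedness of $f,g$ (bounded convergence), $\E[F_N G_N] \to \E_\mu[fg]$ and $\E[F_N],\E[G_N] \to \E_\mu[f],\E_\mu[g]$ as $N \to \infty$.

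\textbf{Asymptotics of the discrete influences.} For a coordinate $j$ in block $i$, flipping $\x_j$ changes $S_i$ by $\Delta = \pm 2/\sqrt{N}$ and leaves the other $S_{i'}$ unchanged. By a first-order Taylor expansion and the fact that $\partial_i f \geq 0$ (monotonicity),
\[
I_j(F_N) \;=\; \E\big|F_N(\x) - F_N(\x^{[j]})\big| \;=\; \tfrac{2}{\sqrt{N}}\,\E\big[\partial_i f(S_1,\dots,S_n)\big] + o(N^{-1/2}),
\]
where the error term comes from the second-order remainder and is $O(N^{-1})$ uniformly because $f \in C^1$ and its derivatives are bounded on the range of interest (after, if needed, a preliminary smooth truncation to a large box; one can approximate a general $C^1$ increasing $f$ by $C^1$ increasing functions with bounded gradient via convolution and truncation, without increasing either side of the target inequality by more than $o(1)$). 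Multiplying the analogous formula for $G_N$, summing the $N$ coordinates within block $i$, and then summing over $i$,
\[
\sum_{j=1}^{nN} I_j(F_N)\,I_j(G_N) \;\longrightarrow\; 4 \sum_{i=1}^{n} \E_\mu[\partial_i f]\,\E_\mu[\partial_i g] \qquad (N \to \infty).
\]

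\textbf{Passing to the limit.} Applying the discrete functional Talagrand bound to $(F_N,G_N)$ and letting $N \to \infty$, using continuity of $\varphi$ at every point of $[0,\infty)$ (with $\varphi(0)=0$), yields
\[
\E_\mu[fg] - \E_\mu[f]\,\E_\mu[g] \;\geq\; c\,\varphi\!\left(4 \sum_{i=1}^{n} \E_\mu[\partial_i f]\,\E_\mu[\partial_i g]\right).
\]
Since $\varphi(4x) \geq \varphi(x)$ for all $x \geq 0$ (in fact $\varphi(4x)/\varphi(x)$ is bounded below uniformly on $(0,\infty)$), absorbing the factor $4$ into $c$ gives the stated inequality.

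\textbf{Main obstacle.} The delicate step is controlling the remainder in the Taylor expansion for the discrete influence uniformly across all $nN$ coordinates and showing that the error contribution to $\sum_j I_j(F_N) I_j(G_N)$ is $o(1)$. This requires either a preliminary reduction to $f,g$ with uniformly bounded first and second derivatives (via smooth approximation; a separate density argument then recovers the general $C^1$ case since both sides of the target inequality depend on $f$ continuously through its $L^1$ norms and through $\E_\mu[fg]$), or a careful treatment that exploits boundedness of $f$ and $g$ together with Gaussian concentration of the sums $(S_1,\dots,S_n)$. A second, more cosmetic, obstacle is promoting Theorem~\ref{Thm:Talagrand-Correlation} from indicators to $[-1,1]$-valued monotone functions; this is handled by the level-set decomposition sketched above.
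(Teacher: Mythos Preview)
Your approach is essentially identical to the paper's: discretize via the CLT, apply a functional discrete Talagrand bound to the approximants, compute the discrete influences asymptotically (the paper does this as Lemma~\ref{lem:infl_approx_fn} via the Mean Value Theorem, needing only $C^1$ with bounded $\partial_i f$), pass to the limit, and then remove the bounded-derivative hypothesis by a mollification/truncation approximation. One small correction: in your layer-cake reduction of the functional discrete bound to the set version, $\varphi(x)=x/\log(e/x)$ is \emph{convex} on $(0,1)$, not concave, and it is Jensen for convex functions that yields $\int\!\!\int \varphi(\sum_i I_i(A_s)I_i(B_t))\,ds\,dt \ge 4\varphi\big(\tfrac14\sum_i I_i(F)I_i(G)\big)$ in the required direction (the paper sidesteps this by citing a direct proof of the functional version from~\cite{keller09}).
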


\begin{theorem} \label{thm:Gaussian-Q-BKS_function}
For any $n \ge 1$, for any continuously  differentiable function $f :\Real^n \to [-1,1]$, and for any $\rho \in (0, 1)$,
\[
\vg(f,\rho) \leq C_1 \cdot \left( \sum_{i=1}^n \E_\mu[|\partial_i f|]^2
\right)^{C_2\rho^2},
\]
where $C_1,C_2$ are positive universal constants.
\end{theorem}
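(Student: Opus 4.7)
The plan is to reduce the Gaussian statement to the discrete quantitative BKS theorem (Theorem~\ref{Quantitative-BKS}) by a CLT-type discretization. For each large integer $N$, I would replace each standard Gaussian coordinate $W_i$ by a normalized Rademacher sum $S_{N,i} := \tfrac{1}{\sqrt{N}}\sum_{j=1}^N X_{i,j}$, where the $(X_{i,j})_{i \le n,\,j \le N}$ are i.i.d.\ uniform on $\{-1,1\}$, and define
\[
F_N : \{-1,1\}^{nN} \to [-1,1], \qquad F_N(x) := f(S_{N,1},\ldots,S_{N,n}).
\]
To couple the noise parameters I would choose $\eta = \eta(\rho)$ via $1-\eta = \sqrt{1-\rho^2}$, so that the per-coordinate covariance $\mathrm{Cov}(X_{i,j}, X_{i,j}^{\eta}) = 1-\eta$ matches the per-coordinate covariance of $(W,\rW)$; elementary algebra then gives $\eta \ge \rho^2/2$.

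The heart of the argument lies in two convergences as $N \to \infty$. First, a bivariate CLT applied independently across $i$ yields $\bigl((S_{N,i})_i,(S_{N,i}^{\eta})_i\bigr) \Rightarrow (W,\rW)$ jointly in distribution, so the boundedness and continuity of $f$ give
\[
\mathrm{VAR}(F_N,\eta) \;\xrightarrow[N\to\infty]{}\; \vg(f,\rho).
\]
Second, flipping a single coordinate $x_{i,j}$ shifts $S_{N,i}$ by $\pm 2/\sqrt{N}$, so the mean value theorem produces an intermediate point $\xi^{N}_{i,j}$ with
\[
\bigl|F_N(x)-F_N(x^{[(i,j)]})\bigr| \;=\; \tfrac{2}{\sqrt{N}}\,\bigl|\partial_i f(\xi^{N}_{i,j})\bigr|,
\]
so that by symmetry in $j$, $\sum_{j=1}^N I_{(i,j)}(F_N)^2 = 4\,\E\bigl[|\partial_i f(\xi^{N}_{i,1})|\bigr]^{2}$, and one expects
\[
\sum_{i=1}^n\sum_{j=1}^N I_{(i,j)}(F_N)^2 \;\xrightarrow[N\to\infty]{}\; 4\sum_{i=1}^n \E_\mu\bigl[|\partial_i f|\bigr]^{2},
\]
again by CLT combined with continuity of each $\partial_i f$.

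Given these convergences, the theorem follows by applying Theorem~\ref{Quantitative-BKS} to the affinely rescaled function $G_N := (F_N+1)/2 : \{-1,1\}^{nN} \to [0,1]$ (which only adjusts constants, since $I_{(i,j)}(G_N) = I_{(i,j)}(F_N)/2$ and $\mathrm{VAR}(G_N,\eta) = \mathrm{VAR}(F_N,\eta)/4$), obtaining
\[
\mathrm{VAR}(F_N,\eta) \;\le\; c'_1\Bigl(\sum_{i,j} I_{(i,j)}(F_N)^2\Bigr)^{c_2\eta},
\]
and then letting $N \to \infty$. Since $\eta \ge \rho^2/2$, the exponent $c_2 \eta$ dominates $C_2\rho^2$, yielding the desired bound after absorbing numerical factors into universal constants.

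The main obstacle I anticipate is justifying the limit $\E\bigl[|\partial_i f(\xi^{N}_{i,1})|\bigr] \to \E_\mu[|\partial_i f|]$ in enough generality: the random intermediate point $\xi^{N}_{i,1}$ lies within $O(1/\sqrt{N})$ of the pre-limit Gaussian sample in one coordinate, so continuous-mapping plus CLT gives convergence in distribution of $|\partial_i f(\xi^{N}_{i,1})|$ to $|\partial_i f(W)|$, but promoting this to convergence of expectations requires some uniform-integrability input (e.g.\ bounded or compactly supported $\partial_i f$). In full generality of merely continuously differentiable $f : \Real^n \to [-1,1]$, the cleanest route is probably to first establish the bound for smooth, compactly supported test functions and then invoke the smooth-approximation machinery of Section~\ref{smooth_approx} to pass to the case of characteristic functions of monotone sets and, more broadly, to the stated functional class; an analogous (easier) uniform-integrability check is also needed for the convergence $Z(F_N,\eta) \to \zg(f,\rho)$, and follows immediately from the boundedness of $f$.
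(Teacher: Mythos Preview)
Your proposal is correct and follows essentially the same route as the paper: the paper also discretizes via normalized Rademacher sums, couples the noise via $\eta = 1-\sqrt{1-\rho^2}$, uses Lemma~\ref{lem:infl_approx_fn} (which is precisely your mean-value-theorem computation, carried out under the extra assumption that the $\partial_i f$ are bounded), and then removes the bounded-derivative hypothesis by a cutoff-and-mollification argument done directly within the proof rather than by appealing to Section~\ref{smooth_approx}. The only small point your outline glosses over is the case split needed to replace the exponent $c_2\eta$ by $C_2\rho^2$: when $\sum_i \E_\mu[|\partial_i f|]^2 > 1$ the bound is trivial since $\vg(f,\rho)\le 1$, and when the sum is $\le 1$ one uses $\eta\ge\rho^2/2$ as you note.
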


The
proof strategy is to approximate the functions  in the ``Gaussian world'' by
sequences of functions defined on the discrete cubes $\{-1,1\}^{n_{\ell}}$
(where $n_\ell \to \infty$), and to deduce the assertions of the
theorems by an appropriate limit argument from the corresponding theorems in the ``discrete world''.

For a function $f:\Real^n \to \Real$, we construct a sequence
$\{\f_m\}_{m=1}^{\infty}$ of functions as follows. For each $m \in
\mathbb{N}$, we denote elements in $\{-1,1\}^{mn}$ by vectors
$(x_1,x_2,\ldots,x_n)$, where each $x_i= (x_{i1}, x_{i2},  \ldots,
x_{im})$ is a vector in $\{-1,1\}^m$. We write $s_i=s_i(m)$ as a
shorthand for $m^{-1/2}\sum_{j=1}^m x_{ij}$ and let $s = (s_1,
\ldots, s_n) \in \Real^n$. Then, we define the function $\f_m :
\{-1,1\}^{mn} \to \Real$ by $\f_m(x_1,\ldots,x_n)= f(s_1, \ldots,
s_n)$. In order to simplify the notation, we leave the dependence
of $s$ on $m$ implicit in some of the places, and alert the reader
that in the sequel, $s$ always depends on $m$.  The next lemma is
our main tool for transferring the results from the discrete world
to the Gaussian world.

\begin{lemma} \label{lem:infl_approx_fn}
Fix $n \ge 1$ and $ 1 \le i \le n$. Let $f$  and $g$ be two continuously differentiable functions on $\mathbb R^n$ such that the partial derivatives $\partial_i f $ and $\partial_i g$ are  bounded. Then
\[
\sum_{j=1}^m  I_{ij} ( \f_m) I_{i j} ( \g_m) \to 4 \E_\mu[ | \partial_i f| ] \E_\mu [| \partial_i g|], \quad \text{ as  } m \to \infty.
\]
\end{lemma}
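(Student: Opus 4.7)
The plan is to reduce the sum to $m$ copies of a single term by symmetry, rewrite that single discrete influence as $(2/\sqrt m)$ times the expectation of an averaged derivative via the fundamental theorem of calculus, and finally pass to the limit using the multivariate central limit theorem together with the continuity and boundedness of $\partial_i f$ and $\partial_i g$.

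First, by the exchangeability of the block $(x_{i1},\dots,x_{im})$ and the fact that $\f_m$ depends on this block only through the symmetric statistic $s_i = m^{-1/2}\sum_{j=1}^m x_{ij}$, the discrete influence $I_{ij}(\f_m)$ is independent of $j$; likewise for $I_{ij}(\g_m)$. Therefore
\[
\sum_{j=1}^m I_{ij}(\f_m)\,I_{ij}(\g_m) \;=\; m\,I_{i1}(\f_m)\,I_{i1}(\g_m),
\]
and it suffices to show that $\sqrt m\,I_{i1}(\f_m) \to 2\,\E_\mu[|\partial_i f|]$, with the analogous statement for $g$.

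Second, flipping $x_{i1}$ moves $s_i$ by $-2 x_{i1}/\sqrt m$ and leaves every other $s_k$ unchanged. Writing $h_m = 2/\sqrt m$, $u = x_{i1}\in\{-1,+1\}$, and $e_i$ for the $i$-th standard basis vector in $\Real^n$, the fundamental theorem of calculus gives
\[
f(s) - f(s - h_m u\, e_i) \;=\; u\int_0^{h_m} \partial_i f(s - r u\,e_i)\,dr.
\]
Defining $H_m(s,u) := h_m^{-1}\int_0^{h_m}\partial_i f(s - r u\,e_i)\,dr$, this rewrites as
\[
I_{i1}(\f_m) \;=\; h_m\,\E\bigl[\,|H_m(s^{(m)}, X_{i1})|\,\bigr],
\]
where $s^{(m)}=(s_1(m),\dots,s_n(m))$ is the vector of block averages under $\nu^{\otimes mn}$.

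Third, pass to the limit. By the multivariate CLT, $s^{(m)}\Rightarrow Z\sim \mu^{\otimes n}$, and the family $\{s^{(m)}\}$ is tight. Since $\partial_i f$ is continuous, $H_m(s,u)\to\partial_i f(s)$ pointwise as $m\to\infty$; by uniform continuity of $\partial_i f$ on compact sets this convergence is uniform on balls $\{|s|\le K\}$, uniformly in $u\in\{\pm 1\}$. A standard split of the expectation according to $\{|s^{(m)}|\le K\}$ and its tail, combined with the uniform bound $|H_m|\le\|\partial_i f\|_\infty$ and the bounded-continuous convergence $\E[|\partial_i f(s^{(m)})|]\to \E_\mu[|\partial_i f|]$ (Portmanteau), yields $\E[|H_m(s^{(m)},X_{i1})|]\to \E_\mu[|\partial_i f|]$. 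Hence $\sqrt m\,I_{i1}(\f_m)\to 2\,\E_\mu[|\partial_i f|]$, and multiplying with the corresponding limit for $g$ and using Step~1 gives the lemma.

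The only genuinely delicate point is the interchange in Step~3, where $H_m$ is evaluated at the random argument $s^{(m)}$ whose distribution also depends on $m$; I expect this to be handled routinely by tightness of $\{s^{(m)}\}$ plus uniform convergence of $H_m$ to $\partial_i f$ on compact sets, and to require no hypothesis beyond the stated continuity and boundedness of $\partial_i f$.
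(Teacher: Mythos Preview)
Your proof is correct and follows essentially the same route as the paper: exploit the block symmetry to reduce to $\sqrt m\,I_{i1}(\f_m)\to 2\,\E_\mu[|\partial_i f|]$, linearize the single-bit flip via calculus, and pass to the limit with the CLT plus boundedness/continuity of $\partial_i f$. The only noteworthy difference is that the paper uses the Mean Value Theorem rather than the integral form of the fundamental theorem. With the MVT one writes the increment as $2m^{-1/2}\,\partial_i f(s_1'+\eps_m,s_2,\dots,s_n)$ for some $|\eps_m|\le m^{-1/2}$, so the integrand is exactly $|\partial_i f|$ evaluated at a random point that converges in distribution (by the CLT for $s_1'$ plus Slutsky for the vanishing $\eps_m$), and the Portmanteau theorem applies directly. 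Your averaged-derivative $H_m$ is not of this form, which is why you need the extra tightness/uniform-convergence-on-compacts split in Step~3; that split is correct as you outline it, but the MVT shortcut avoids it entirely.
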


\begin{proof}
Since the functions $\f_m$ and $\check g_m$ are invariant under
permutations of the coordinates $\{x_{ij} \}_{ 1 \le j \le m}$ for
each fixed $i$,  it follows that  $\sum_{j=1}^m  I_{i j} ( \f_m)
I_{ij} ( \g_m) = m  I_{i1} ( \f_m) I_{i1} ( \g_m)$.  Thus, it
suffices to show that $ \sqrt{m} I_{i1} ( \f_m) \to  2\E_\mu[ |
\partial_i f| ]$ and similarly for $g$. Without loss of
generality,  we take $i=1$. We have
\[ I_{11} ( \f_m) =  \E_\nu \Big[  \big| f(s'_1 + m^{-1/2}, s_2, \ldots, s_n)] - f(s'_1- m^{-1/2}, s_2, \ldots, s_n) \big| \Big], \]
where $s'_1 = m^{-1/2} \sum_{ j=2}^m x_{1j}$. By the Mean Value
Theorem,
\[ \frac{ f(s'_1 + m^{-1/2}, s_2, \ldots, s_n)  -   f(s'_1 - m^{-1/2}, s_2, \ldots, s_n)}{ 2m^{-1/2} } = \partial_1 f(s_1' + \eps_m, s_2, \ldots, s_m), \]
where $\eps_m$ is an error term  that depends on $s'_1, s_2,
\ldots, s_n$, and whose absolute value is bounded by $m^{-1/2}$.
Therefore, we obtain
\[ \sqrt{m} I_{11} ( \f_m) = 2 \E_\nu \Big[  \big| \partial_1 f(s_1' + \eps_m, s_2, \ldots, s_m) \big| \Big]. \]
Since $(s_1' + \eps_m, s_2, \ldots, s_m)$  converges in
distribution to $\mu^{\otimes n}$, and since $\partial_1 f$ is a
continuous, bounded function, we conclude that
\[ \lim_{ m \to \infty} \sqrt{m} I_{11} ( \f_m)  = 2\E_\mu[ | \partial_1 f|].\]
The assertion of the lemma follows.
\end{proof}

To prove Theorem~\ref{thm:gaussian_talagrand_func}, we will need
the following  functional version of Talagrand's inequality on the
discrete cube.
\begin{theorem} \label{Thm:Talagrand-Correlation_func}
For any $n \ge 1$ and for any pair of increasing functions $f,g :\{-1,1\}^n \to [0, 1]$,
\[
\E_\nu[ fg] - \E_\nu[ f]  \E_\nu[ g]
\geq c \varphi \left(\sum_{i=1}^n I_i(f) I_i(g) \right),
\]
where $\varphi(x)=x/\log(e/x)$, and $c>0$ is a  universal constant.
\end{theorem}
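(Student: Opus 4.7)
The plan is to deduce this functional version from the set case (Theorem~\ref{Thm:Talagrand-Correlation}) via a layer-cake decomposition. First I would write $f(x) = \int_0^1 \1_{A_t}(x)\,dt$ with $A_t := \{f \geq t\}$, and analogously $g(x) = \int_0^1 \1_{B_s}(x)\,ds$ with $B_s := \{g \geq s\}$. Since $f$ and $g$ are increasing, each $A_t$ and $B_s$ is an increasing subset of $\{-1,1\}^n$, and Fubini gives
\[
\E_\nu[fg] - \E_\nu[f]\E_\nu[g] = \int_0^1\!\int_0^1 \bigl[\nu(A_t \cap B_s) - \nu(A_t)\nu(B_s)\bigr]\, ds\, dt.
\]

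Next I would apply Theorem~\ref{Thm:Talagrand-Correlation} pointwise in $(t,s)$, lower-bounding the integrand by $c\,\varphi\bigl(\sum_i I_i(A_t)I_i(B_s)\bigr)$. In parallel, a pointwise layer-cake argument linearizes the influences: writing $X^\pm$ for the two configurations obtained from $X$ by fixing the $i$-th coordinate to $\pm 1$, and using that $f$ and each $\1_{A_t}$ are increasing,
\[
|f(X)-f(X^{[i]})| = f(X^+)-f(X^-) = \int_0^1\bigl(\1_{A_t}(X^+)-\1_{A_t}(X^-)\bigr)\,dt = \int_0^1 |\1_{A_t}(X)-\1_{A_t}(X^{[i]})|\,dt.
\]
Taking expectations yields the key identity $I_i(f) = \int_0^1 I_i(A_t)\,dt$ (and similarly for $g$), so combining the two ingredients,
\[
\E_\nu[fg] - \E_\nu[f]\E_\nu[g] \;\geq\; c \int_0^1\!\int_0^1 \varphi\!\left(\sum_i I_i(A_t)I_i(B_s)\right) ds\, dt.
\]

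The remaining step is to pull the double integral inside $\varphi$. A direct computation shows that $\varphi(x)=x/(1-\log x)$ satisfies $\varphi''(x) > 0$ on $(0,e)$, hence is convex on the relevant interval $(0,1]$. Viewing $ds\,dt$ as the uniform probability measure on $[0,1]^2$, Jensen's inequality yields
\[
\int_0^1\!\int_0^1 \varphi\!\left(\sum_i I_i(A_t)I_i(B_s)\right) ds\, dt \;\geq\; \varphi\!\left(\sum_i \Bigl(\int_0^1 I_i(A_t)\,dt\Bigr)\Bigl(\int_0^1 I_i(B_s)\,ds\Bigr)\right) = \varphi\!\left(\sum_i I_i(f)I_i(g)\right),
\]
completing the argument. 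The one delicate point will be the boundary regime $\sum_i I_i(f)I_i(g) \geq 1$, where the argument of $\varphi$ may leave the convexity domain $(0,e)$; but there the theorem is already essentially trivial (the covariance is $O(1)$ while $\varphi$ stays bounded on any fixed compact subinterval of $[1,e)$), so the discrepancy can be absorbed into the universal constant $c$, or one can replace $\varphi$ by its convex minorant without loss.
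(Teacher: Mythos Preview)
Your argument is correct. The layer-cake decomposition and the identity $I_i(f)=\int_0^1 I_i(A_t)\,dt$ for increasing $f$ are valid, and your convexity computation for $\varphi(x)=x/(1-\log x)$ is right: with $u=1-\log x$ one gets $\varphi''(x)=(u+2)/(xu^3)>0$ on $(0,e)$, so Jensen applies. In fact the boundary worry you raise does not arise: for increasing sets $I_i(A)=2\hat{\1_A}(\{i\})$, hence $\sum_i I_i(A)^2\le 4\,\mathrm{Var}(\1_A)\le 1$, and by Cauchy--Schwarz the integrand $\sum_i I_i(A_t)I_i(B_s)$ always lies in $[0,1]$. So the convex minorant / absorbing-into-$c$ step is unnecessary.

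Your route, however, differs from the paper's. The paper does not reduce the functional statement to the set case at all; it obtains Theorem~\ref{Thm:Talagrand-Correlation_func} by rerunning Talagrand's original Fourier-analytic argument verbatim, replacing the observation $I_i(A)=|\hat{\1_A}(\{i\})|$ (up to a factor) by its functional analogue $I_i(f)=|\hat f(\{i\})|$ for monotone $f$, and deferring the details to~\cite{keller09}. The trade-off: your layer-cake-plus-Jensen argument is self-contained and treats Theorem~\ref{Thm:Talagrand-Correlation} as a black box, with no need to reopen the hypercontractivity machinery; the paper's approach avoids the extra Jensen step (and any attendant loss in the constant) and in principle yields slightly sharper control, at the cost of reproducing Talagrand's proof in the broader setting.
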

This version is obtained by following Talagrand's proof
step-by-step, using the fact that for a monotone function $f$,
$I_i(f)$ is equal in absolute value to the coefficient $\hat
f(\{i\})$ in the standard Fourier-Walsh expansion of $f$. The
exact proof (of a slightly more general statement) appears
in~\cite{keller09}.


\begin{proof}[Proof of Theorem~\ref{thm:gaussian_talagrand_func}]
Note that since $f, g$ are increasing and bounded, by the
Fundamental Theorem of Calculus, $\partial_i f,  \partial_i g$ are
nonnegative and integrable. In particular,  we have $ 0 \le
\E_\mu[\partial_i f],  \E_\mu[\partial_i g]  < \infty$ for all
$i$. First we assume that  $f,g$ are increasing $C^1$ functions on
$\mathbb{R}^n$ such that both $f, g$ take values in $[0,1]$ and
$\|\partial_i f\|_\infty, \|\partial_i g\|_\infty < \infty$ for
all $i$. It follows from
Theorem~\ref{Thm:Talagrand-Correlation_func} that there exists a
universal constant $c>0$ such that for each $m \in \mathbb{N}$, we
have
\begin{equation}\label{ineq:dis_tala}
\int \f_m \g_m d\nu^{\otimes nm}  -\int \f_m  d\nu^{\otimes nm}  \int  \g_m d\nu^{\otimes nm}  \ge c\varphi
\Big(\sum_{ i=1}^n \sum_{j=1}^m  I_{i j}(\f_m)
I_{ij}(\f_m)\Big).
\end{equation}
By the Central Limit Theorem, $s(m) = (s_1, \ldots, s_n)$
converges in distribution to $\mu^{\otimes n}$ as $m \to \infty$.
Thus,  the left hand side of \eqref{ineq:dis_tala} converges to
$\E_\mu[ fg] - \E_\mu[ f]  \E_\mu[ g] $ as $m \to\infty$. On the
other hand, by letting $m \to \infty$ and applying
Lemma~\ref{lem:infl_approx_fn} to the right hand side of
\eqref{ineq:dis_tala}, we obtain
\begin{equation}\label{ineq:fg}
\E_\mu[ fg] - \E_\mu[ f]  \E_\mu[ g] \ge
c\varphi \Big(\sum_{ i=1}^n \E_\mu[\partial_i f]  \E_\mu[\partial_i g] \Big).
\end{equation}
We can easily extend the above inequality, with the constant $c$
replaced by a new constant $c/ (1+\log 2)$,  to increasing $C^1$
functions  $f,g$  such that both $f, g$ take values in $[-1,1]$
and $\|\partial_i f\|_\infty, \|\partial_i g\|_\infty < \infty$
for all $i$. To do that we apply \eqref{ineq:fg} for  the
functions $(1+f)/2,  (1+g)/2$  and note that $2 \varphi(x/2) \ge
\frac{1}{1+\log 2} \varphi(x)$ for all $x \in [0, 1]$.

Now we want to remove the  condition that the partial derivatives
of $f, g$ are bounded.  Let $f, g$ be as given in the hypothesis
of Theorem~\ref{thm:gaussian_talagrand_func}. For $K>0$, set $J_K
= [-K, K]^n, M_K = f(K, \ldots, K)$, and $m_K = f(-K, \ldots,
-K)$. Since $f$ is increasing, $M_K = \max_{ x \in J_K} f(x)$ and
$m_K = \min_{ x \in J_K} f(x)$. Let $f_K = \min( \max(f, m_K),
M_K)$. Hence, $f_K \equiv f$ inside $J_K$. Let $\eta \in
C^\infty(\Real^n)$ be the standard mollifier, that is, $\eta(x) =
C \exp \left( \frac{1}{|x|^2 -1}\right)\1_{|x| \le 1}$, where the
constant $C>0$ is selected so that $\int_{\Real^n} \eta(x) dx =
1$. For  each $ \eps >0$, set  $\eta_\eps(x) : = \eps^{-n} \eta(x/
\eps)$. Finally, define $f_{K, \eps} = f_K * \eta_\eps =
\int_{\Real^n} f_K(x-y)\eta_\eps(y)dy$. From the standard
properties of the mollifier, it follows that $ f_{K, \eps} \in
C^\infty(\Real^n)$, $f_{K, \eps}$  is increasing and  $|f_{K,
\eps}| \le 1$.  Note that for any $h \in \Real$, for any $z \in
\Real^n$,
\[ 0\le \frac{f_K(z + e_i h)  - f_K(z)}{h} \le \frac{f(z + e_i h)  - f(z)}{h}, \]
$e_i$ being the $i^{th}$ coordinate vector in $\Real^n$. It
follows that $0 \le \partial_i f_{K, \eps}   \le  \partial_i (f *
\eta_\eps) =  \partial_i f * \eta_\eps $.

Given $\delta>0$, we claim that there exist $K>0$ and $\eps>0$ such that $\int_{\mathbb R^n} | f_{K, \eps}  - f|^2 d \mu^{\otimes n} < \delta$ and  $\int_{\mathbb R^n} | \partial_i f_{K, \eps}  - \partial_i f| d \mu^{\otimes n} < \delta$.  To prove the claim, first find $K>0$ large such that $ |\int_{J_{K-1/2}^c} \partial_i f d \mu^{\otimes n}| < \delta/3$. For $0< \eps< 1/2$,  $\partial_i f_K = 0$ outside $J_{K+1/2}$ and
 we estimate \begin{align*}
\int_{\mathbb R^n} | \partial_i f_{K, \eps}  - \partial_i f| d \mu^{\otimes n}  &\le \int_{J_{K+1/2}} | \partial_i f_{K, \eps}  - \partial_i f *\eta_\eps| d \mu^{\otimes n} + \int_{J_{K+1/2}} |\partial_i f *\eta_\eps -   \partial_i f | d \mu^{\otimes n}   \\
& + \int_{J_{K+1/2}^c} \partial_i f d \mu^{\otimes n}.
\end{align*}
Note that whenever $\eps \in (0, 1/2)$, $\partial_i f_{K, \eps} =
\partial_i f * \eta_\eps$   on $J_{K-1/2}$. Hence,
\[\int_{J_{K+1/2}} | \partial_i f_{K, \eps}  - \partial_i f *\eta_\eps| d \mu^{\otimes n} \le \int_{J_{K+1/2} \cap J_{K-1/2}^c} |\partial_i f *\eta_\eps| d \mu^{\otimes n}.\]
By the well-known property of the mollifier, $ \partial_i f *\eta_\eps \stackrel{L^p}{\to} \partial_i f $ for any $ 1 \le p < \infty$ over compact sets. Thus, by choosing $\eps>0$ small we can make
$\int_{J_{K+1/2}} |\partial_i f *\eta_\eps -   \partial_i f | d \mu^{\otimes n} < \delta/3  $ and $ \int_{J_{K+1/2} \cap J_{K-1/2}^c} |\partial_i f *\eta_\eps| d \mu^{\otimes n} \le  \int_{J_{K+1/2} \cap J_{K-1/2}^c} |\partial_i f | d \mu^{\otimes n} + \delta/3$ and hence, $\int_{\mathbb R^n} | \partial_i f_{K, \eps}  - \partial_i f| d \mu^{\otimes n} < \delta$.

On the other hand, note that
\begin{align*}
\int_{\mathbb R^n} | f_{K, \eps}  - f|^2 d \mu^{\otimes n}  &\le 2 \int_{\mathbb R^n} | f_{K, \eps}  - f * \eta_\eps|^2 d \mu^{\otimes n} + 2 \int_{\mathbb R^n} | f * \eta_\eps - f|^2 d \mu^{\otimes n}.
\end{align*}
For $\eps>0$ fixed, $ \int_{\mathbb R^n} | f_{K, \eps}  - f * \eta_\eps|^2 d \mu^{\otimes n} \to 0$ as $K \to \infty$ by dominated convergence.  Since $ f * \eta_\eps \to f$ pointwise as $\eps \to 0$, the second integral also goes to zero  by dominated convergence. Thus we establish our claim.

Now note that \eqref{ineq:fg} holds for functions $f_{K, \eps}$ and $g_{K, \eps}$. We complete the proof of the theorem by  approximating the original functions $f$ and $g$ by $f_{K, \eps}$ and $g_{K, \eps}$ with suitably  large $K$ and small $\eps$.
\end{proof}


Now we prove
Theorem~\ref{thm:Gaussian-Q-BKS_function}, thus obtaining a
Gaussian analogue of the quantitative BKS
theorem~\cite{Keller-Kindler}.

\begin{proof}[Proof of Theorem~\ref{thm:Gaussian-Q-BKS_function}]
Assume first that $f$ is continuously differentiable with bounded
partial derivatives. We apply Theorem~\ref{Quantitative-BKS} to
the approximating  functions $\f_m:\{-1,1\}^{nm} \to [0,1]$ to
obtain, for any $\eta \in (0,1)$ and  any $m \ge 1$,
\begin{equation}\label{ineq:approx_gns}
\mathrm{VAR}(\f_m,\eta) \leq c_1 \cdot \left( \sum_{i=1}^n \sum_{j=1}^m I_{ij}(\f_m)^2
\right)^{c_2 \cdot \eta},
\end{equation}
where $c_1>0,c_2>0$ are universal constants. We claim that
$\mathrm{VAR}(\f_m,1 - \sqrt{1- \rho^2}) \to \vg(f, \rho)$ as $m
\to \infty$. Let $\x = (x_{ij})_{1 \le i \le n, 1 \le j \le m}$
and $\y = (y_{ij})_{1 \le i \le n, 1 \le j \le m}$ be $\sqrt{1 -
\rho^2}$ correlated vectors that are uniformly distributed in $\{
-1, 1\}^{mn}$.  Set $s(m) = (s_1, \ldots, s_n)$ and  $\sr(m) =
(\sr_1, \ldots, \sr_n)$ where $s_i = m^{-1/2}\sum_{j=1}^m x_{ij}$
and $\sr_i = m^{-1/2}\sum_{j=1}^m y_{ij}$. By definition, $Z(\f_m,
1 - \sqrt{1- \rho^2}) = \E_\nu[ f(s(m))f(\sr(m))]$. By the Central
Limit Theorem, $(s(m), \sr(m))$ converges in distribution to
$(W,\rW) $ as $m \to \infty$. Since the map $(z, z') \mapsto
f(z)f(z')$ is bounded and continuous on $\Real^{2n}$, it follows
that $\lim_{m \to \infty} Z(\f_m, 1 - \sqrt{1- \rho^2}) = \zg(f,
\rho)$.  That $\E_\nu [\f_m(\x)] =\E_\nu[ f(s(m))] $ converges to
$\E_\mu [f(W)]$ as $m \to \infty$  is again an immediate
consequence of the Central Limit Theorem. This proves the claim.
By letting $m \to \infty$ in \eqref{ineq:approx_gns} with $\eta =
1 - \sqrt{1 - \rho^2} \in (0,1)$  and by virtue of  the above
claim and Lemma~\ref{lem:infl_approx_fn}, we obtain the following
inequality for the function $f$ with $C_2 = c_2$ and $C_1' =
4c_1$,
\begin{equation} \label{ineq:discrete_to_Gaussian_BKS}
\vg(f,\rho) \leq C_1' \cdot \left( \sum_{i=1}^n \E_\mu[|\partial_i f|]^2
\right)^{C_2'(1 - \sqrt{1 - \rho^2})}.
\end{equation}
Extending \eqref{ineq:discrete_to_Gaussian_BKS} to $C^1$ functions
$f$ with bounded partial derivatives which  take values in
$[-1,1]$ instead of $[0,1]$ is fairly straightforward and can be
achieved  (with $C_1 = 2C_1'$) by arguing with the function
$(1+f)/2$ which now takes values in $[0,1]$. If $\sum_{i=1}^n
\E_\mu[|\partial_i f|]^2 \le 1$, then by observing the simple fact
that  $1- \sqrt{1 - \rho^2} \ge \rho^2/2$ for all $\rho \in (0,
1)$,  we get the desired inequality (with $C_2 = C_2'/2$) for the
function $f$.  On the other hand,  if  $\sum_{i=1}^n
\E_\mu[|\partial_i f|]^2 >1$, then the assertion of the theorem
trivially holds for $f$ since $\vg(f,\rho) \le |f| \le 1$.

Now take a general $C^1$ function $f: \mathbb R^n \to [-1,1]$. If
$\E_\mu[|\partial_i f|] = \infty$ for some $i$, then the theorem
holds trivially. So, assume that $\E_\mu[|\partial_i f|] < \infty$
for all $i$. Let $\eta, \eta_\eps $ and $J_K$ be as above.  Define
$f_K = f \1_{J_K}$ and $f_{K, \eps} = f_K * \eta_\eps$. Clearly,
$f_{K, \eps}$ is $C^\infty$ and $|f_{K, \eps}| \le 1$ and
$|\partial_i f_{k, \eps}|$ are bounded  for all $i$ (since $f_{K,
\eps}$ is compactly supported).  Note that as $K \to \infty, \eps
\to 0$, $f_{K, \eps}(z) \to f(z)$ pointwise, and hence by
dominated convergence, $\vg(f_{K, \eps}, \rho) \to \vg(f, \rho)$.
Next we prove that $\E_\mu[|\partial_i f_{K, \eps}  - \partial_i
f|] \to 0$ as $K \to \infty, \eps \to 0$. Towards this end, we
bound
\begin{align}\label{bound_decomp}
\E_\mu[|\partial_i f_{K, \eps}  - \partial_i f|]  &\le \E_\mu[|\partial_i f_{K, \eps}  - \partial_i f| \1_{ J_{K - \eps}}] + \E_\mu[|\partial_i f_{K, \eps}  - \partial_i f| \1_{ J_{K+\eps} \cap J_{K - \eps}^c}] \notag \\ &+ \E_\mu[|\partial_i f_{K, \eps}  - \partial_i f| \1_{ J_{K+\eps}^c}] \notag \\
 &\le \E_\mu[|\partial_i (f * \eta_\eps)  - \partial_i f| \1_{ J_{K}}] + \E_\mu[|\partial_i f_{K, \eps}|   \1_{ J_{K+\eps} \cap J_{K - \eps}^c}]  + \E_\mu[|\partial_i f| \1_{ J_{K}^c}].
\end{align}
Note that $ \partial_i f_{K, \eps}(x)  = \int f_K(y) \partial_i
\eta_\eps(x-y)dy = \eps^{-1} \int \partial_i \eta(z) f_K(x- \eps
z)dz$. Since  $|f_K|$ is  bounded by $1$, and $\int |\partial_i
\eta(z)| dz < \infty$, we have  $|\partial_i f_{K, \eps} | \le
C'\eps.$ Thus  the second expectation in \eqref{bound_decomp} can
be bounded above by $ C'\eps^{-1} \mu^{\otimes n} (J_{K+\eps} \cap
J_{K - \eps}^c) ) \le C'' \phi(K)$, where the constant $C''$ does
not depend on $K$ or $\eps$. The third  expectation in
\eqref{bound_decomp} can be made arbitrarily small by taking $K$
sufficiently large and the first expectation can be made as small
as we want choosing $\eps >0$ sufficiently small.  Therefore,
$\E_\mu[|\partial_i f_{K, \eps}|] - \E_\mu[ |\partial_i f|] \to 0$
as $K \to \infty, \eps \to 0$.

Clearly, the statement of the theorem holds for each $f_{K, \eps}$. Taking $K \to \infty, \eps \to 0$, we obtain the desirable conclusion for the original function $f$.
\end{proof}

\section{A Direct Gaussian Approach via the Ornstein-Uhlenbeck Semigroup}
\label{sec:Direct}

In this section we prove a functional version of Theorem~\ref{thm:alt_bound} (Theorem~\ref{thm:alt_bound_fn} below) and an
inverse Gaussian BKS theorem  using tools from the ``Gaussian
world'' without appealing to the corresponding results for the
discrete cube as we did in  the previous section.

 \begin{theorem} \label{thm:alt_bound_fn}
Let $f, g: \Real^n \to [-1,1]$ be increasing continuously
differentiable functions. Then
\[ \E_\mu [fg] - \E_\mu [f] \E_\mu [g] \ge  c \sum_{i=1}^n \frac{ \E_\mu [\partial_i f] \E_\mu[\partial_i g]} {\sqrt{\log (e/\E_\mu[\partial_i f]) \log (e/\E_\mu[\partial_i g] )}},\]
where $c>0$ is a universal constant.
\end{theorem}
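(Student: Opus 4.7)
The plan is to carry out a direct Gaussian computation using the Ornstein--Uhlenbeck semigroup $(P_t)_{t\ge 0}$ and Borell's reverse hypercontractive inequality, bypassing any discrete approximation. The point of departure is the semigroup representation of covariance
\[
\E_\mu[fg]-\E_\mu[f]\E_\mu[g]\ =\ \sum_{i=1}^n\int_0^\infty e^{-t}\,\E_\mu\bigl[\partial_if\cdot P_t\partial_ig\bigr]\,dt,
\]
which follows from self-adjointness of $P_t$, the intertwining $\partial_iP_t=e^{-t}P_t\partial_i$, and $P_\infty h=\E_\mu[h]$. Since $f,g$ are increasing, $\partial_if,\partial_ig\ge 0$; with $a_i:=\E_\mu[\partial_if]$, $b_i:=\E_\mu[\partial_ig]$, it suffices to establish a per-coordinate bound
\[
\int_0^\infty e^{-t}\,\E_\mu[\partial_if\cdot P_t\partial_ig]\,dt\ \ge\ c\,\frac{a_ib_i}{\sqrt{\log(e/a_i)\log(e/b_i)}},
\]
and sum over $i$.

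For fixed $i$ and $t$, Borell's reverse hypercontractive inequality asserts that for nonnegative $u,v$ and $p,q<1$ with $(1-p)(1-q)\ge e^{-2t}$,
\[
\E_\mu[u\cdot P_tv]\ \ge\ \|u\|_p\,\|v\|_q.
\]
I would apply this with $u=\partial_if$, $v=\partial_ig$, saturating the constraint asymmetrically via $1-p=\beta e^{-t}$ and $1-q=e^{-t}/\beta$ for a free parameter $\beta>0$. The asymmetry in $\beta$ is the key feature: it will eventually separate the two logarithmic factors in the target bound symmetrically into their geometric mean $\sqrt{\log(e/a_i)\log(e/b_i)}$.

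The main obstacle is that Borell's bound involves the $L^{1-\delta}$ norms of $\partial_if$ and $\partial_ig$, which a priori can be arbitrarily small relative to $a_i,b_i$ (imagine $\partial_if$ being very spiky, as happens when $f$ approximates a step function). The saving grace is the boundedness $f\in[-1,1]$, which enforces the fiber-wise $L^1$ constraint $\int_{\mathbb R}\partial_if(\cdots,x_i,\cdots)\,dx_i\le 2$. The technical heart of the argument is a lemma asserting that, subject to this constraint and to the prescribed mean $a_i$, one has a lower bound of roughly
\[
\|\partial_if\|_{1-\delta}\ \ge\ c\,a_i\,\exp\!\bigl(-C\,\delta\log(e/a_i)\bigr)\qquad\text{for }0<\delta\le\tfrac12,
\]
with extremizer essentially corresponding to $f$ being a one-dimensional step function in the $i$-th coordinate at level $x_i=\Phi^{-1}(1-a_i/2)$. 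Inserting this lemma into Borell's inequality yields
\[
\E_\mu[\partial_if\cdot P_t\partial_ig]\ \ge\ c^2\,a_ib_i\,\exp\!\Bigl(-C\beta e^{-t}\log(e/a_i)-C\tfrac{e^{-t}}{\beta}\log(e/b_i)\Bigr).
\]
Choosing $\beta=\sqrt{\log(e/b_i)/\log(e/a_i)}$ collapses the exponent to $-2Ce^{-t}\sqrt{\log(e/a_i)\log(e/b_i)}$, and a Laplace-type evaluation of $\int_0^\infty e^{-t}(\cdots)\,dt$ localizes the mass to $e^{-t}\asymp 1/\sqrt{\log(e/a_i)\log(e/b_i)}$ and delivers the per-coordinate bound. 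Summing over $i$ finishes the proof. Everything after the $L^{1-\delta}$-norm lemma is bookkeeping and calibration of parameters; the substantive work is that lemma, which is precisely where the boundedness of $f,g$ enters quantitatively.
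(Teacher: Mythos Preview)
Your overall architecture matches the paper's: the covariance representation via the Ornstein--Uhlenbeck semigroup, followed by Borell's reverse hypercontractivity with an asymmetric choice of exponents, and then an optimization that produces the geometric mean $\sqrt{\log(e/a_i)\log(e/b_i)}$. The divergence is in how you propose to lower-bound the sub-unit norms $\|\partial_i f\|_{1-\delta}$, and there the argument breaks.

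The lemma you state is false. The fiber-wise constraint $\int_{\mathbb R}\partial_i f\,dx_i\le 2$ does \emph{not} prevent $\|\partial_i f\|_{1-\delta}$ from being arbitrarily small while $\E_\mu[\partial_i f]=a_i$ stays bounded away from zero. Take $n=1$ and let $f_\epsilon$ be a smooth increasing approximation to $\mathbf 1_{[x_0,\infty)}$ with $f_\epsilon'\approx (2/\epsilon)\mathbf 1_{[x_0,x_0+\epsilon]}$. Then $\int f_\epsilon'\,dx=2$ and $\E_\mu[f_\epsilon']\to 2\phi(x_0)=:a>0$, yet for any $p\in(0,1)$,
\[
\|f_\epsilon'\|_p^p\ \approx\ \Bigl(\tfrac{2}{\epsilon}\Bigr)^p\phi(x_0)\,\epsilon\ =\ 2^p\phi(x_0)\,\epsilon^{1-p}\ \longrightarrow\ 0.
\]
This is precisely your ``extremizer'' (the step function), and at that extremizer the $L^{1-\delta}$ norm collapses to zero, not to $c\,a\,(a/e)^{C\delta}$. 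So the fiber-wise $L^1$ bound is not the saving grace; boundedness of $f$ has to be used differently.

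The paper's remedy is to insert a fixed amount of smoothing before invoking reverse hypercontractivity: one shifts the time integral by $1$ and rewrites each term as $\E_\mu[\partial_i P_{1/2}f\cdot P_t\,\partial_i P_{1/2}g]$. The point is that $|f|\le 1$ gives the gradient bound $|\nabla P_{1/2}f|\le \sqrt{2}$ (the standard estimate $|\nabla P_t g|\le t^{-1/2}$ for $|g|\le 1$), so $\partial_i P_{1/2}f$ is bounded in $L^\infty$. For a nonnegative function $h$ with $\|h\|_\infty\le M$ one has the elementary inequality $\|h\|_1\le M^{1-p}\|h\|_p^p$, which immediately yields $\|\partial_i P_{1/2}f\|_p\ge c\,\|\partial_i f\|_1^{1/p}$ for $p\in(0,1)$ (using also $\E_\mu[\partial_i P_{1/2}f]=e^{-1/2}\E_\mu[\partial_i f]$). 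From there your optimization in $p,q$ goes through essentially as you wrote. In short: replace your $L^{1-\delta}$-norm lemma by the pre-smoothing step $f\mapsto P_{1/2}f$, and the rest of your outline is correct.
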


We start with a few standard definitions and simple lemmas related
to the Ornstein-Uhlenbeck semigroup. For a more detailed treatment
of these notions, the reader is referred to~\cite{CEL,Ledoux2}.

\begin{definition}
Let  $(P_t)_{t\ge0}$ be the Ornstein-Uhlenbeck semigroup
associated with the generator $L = \Delta  - x \cdot \nabla$ on
$\Real^n$. This semigroup acts on the functions on $\Real^n$ as
follows:
\[
P_t f(x) = \int f( e^{-t} x + \sqrt{ 1- e^{-2t}}y)
\mu^{\otimes n}(dy), \quad x \in \Real^n.
\]
\end{definition}

It is well known that $(P_t)_{ t \ge 0}$ is reversible with the
invariant measure $\mu^{\otimes n}$. For $t>0$, the operator $P_t$
maps bounded measurable functions to $C^\infty$ functions. It also
maps an increasing function to an increasing function. The
following simple properties of the operator $P_t$ will be very
useful for later purposes:
\begin{observation}
Let $g:\Real^n \to \Real$ be a smooth function. Then:
\begin{enumerate}
\item[i.]
\begin{equation} \label{OU_fact:1}
 \partial_i P_t g  = e^{-t} P_t \partial_i g \quad \forall t \ge 0.
 \end{equation}

\item[ii.] If $|g(x)| \le 1$ for all $x$, then
 \begin{equation} \label{OU_fact:2}
 |\nabla P_t g | \le \frac{1}{\sqrt t} \quad 0< t\le 1/2.
 \end{equation}

\item[iii.] If $g$ is increasing, then
 \begin{equation} \label{OU_fact:3}
 \partial_i P_t g \ge 0  \quad \forall t \ge 0.
  \end{equation}
\end{enumerate}
\end{observation}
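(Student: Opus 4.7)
The plan is to derive all three assertions directly from the Mehler representation $P_t g(x) = \int g(e^{-t} x + \sqrt{1 - e^{-2t}}\, y)\, \mu^{\otimes n}(dy)$, which exhibits $P_t$ as integration against a positive Gaussian kernel in the variable $y$.

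For (i), I would differentiate under the integral sign in the Mehler formula with respect to $x_i$. The chain rule produces a factor $e^{-t}$ times $(\partial_i g)(e^{-t} x + \sqrt{1-e^{-2t}}\, y)$, whose integral against $\mu^{\otimes n}(dy)$ is precisely $P_t(\partial_i g)(x)$. The interchange of $\partial_i$ and $\int$ is legitimate by dominated convergence; if one worries about global growth of $g$, a truncation of $g$ followed by passage to the limit handles the general smooth case.

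For (ii), starting from (i), I would transfer the $x$-derivative onto the Gaussian weight by integration by parts in $y$. Using the identity $(\partial_i g)(e^{-t}x + \sqrt{1-e^{-2t}}\, y) = (1-e^{-2t})^{-1/2}\, \partial_{y_i}[g(e^{-t}x + \sqrt{1-e^{-2t}}\, y)]$ together with the Gaussian integration-by-parts formula $\int \partial_{y_i} F\, d\mu^{\otimes n} = \int y_i F\, d\mu^{\otimes n}$, I obtain
\[
\partial_i P_t g(x) = \frac{e^{-t}}{\sqrt{1-e^{-2t}}} \int g(e^{-t} x + \sqrt{1-e^{-2t}}\, y)\, y_i\, \mu^{\otimes n}(dy).
\]
Squaring and summing over $i$, the right side becomes $\tfrac{e^{-2t}}{1-e^{-2t}}\, |u(x)|^2$, where $u(x) \in \Real^n$ has entries $\E_\mu[g(e^{-t}x + \sqrt{1-e^{-2t}}\, Y)\, Y_i]$ for $Y \sim \mu^{\otimes n}$. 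Setting $v = u(x)/|u(x)|$ and using $|g| \le 1$ gives $|u(x)| = \E_\mu[g(\cdots)(v \cdot Y)] \le \E_\mu|v \cdot Y| = \sqrt{2/\pi}$, since $v \cdot Y \sim N(0,1)$. Combined with the elementary bound $1 - e^{-2t} \ge 2t\, e^{-2t}$ (equivalent to $e^{2t} \ge 1 + 2t$), this yields $|\nabla P_t g(x)|^2 \le 1/(\pi t) \le 1/t$, proving the claim (which in fact holds for every $t > 0$, not only $t \le 1/2$).

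For (iii), a smooth coordinate-wise increasing $g$ satisfies $\partial_i g \ge 0$ pointwise. Since $P_t$ is integration against the positive measure $\mu^{\otimes n}$, it preserves nonnegativity, so $P_t(\partial_i g) \ge 0$, and (i) then gives $\partial_i P_t g = e^{-t} P_t(\partial_i g) \ge 0$. The only genuine technical subtlety across the three parts is justifying the interchange of derivative and integral in (i) and the vanishing of boundary terms in the integration by parts of (ii); both are routine via truncation plus dominated convergence given the smoothness hypothesis and pose no conceptual obstacle.
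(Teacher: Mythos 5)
Your proof is correct: the paper states these three facts as a standard Observation without proof, and your route through the Mehler representation — differentiation under the integral for (i), the Gaussian integration-by-parts identity yielding $\partial_i P_t g(x)=\frac{e^{-t}}{\sqrt{1-e^{-2t}}}\,\E\bigl[Y_i\, g(e^{-t}x+\sqrt{1-e^{-2t}}\,Y)\bigr]$ combined with $\E|v\cdot Y|=\sqrt{2/\pi}$ and $1-e^{-2t}\ge 2te^{-2t}$ for (ii), and positivity preservation of $P_t$ for (iii) — is exactly the standard argument the authors have in mind, with the routine interchanges of limit and integral handled as you indicate. In fact your bound $|\nabla P_t g|\le 1/\sqrt{\pi t}$, valid for all $t>0$, is slightly stronger than the stated $1/\sqrt{t}$ on $(0,1/2]$.
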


\begin{lemma} \label{eq:p_norm_bound}
Let $g$ be a smooth function with $|g(x)| \le 1$ for all $x$, and
let $t \in (0, 1/2]$.
\begin{enumerate}
\item[(i)] For $p \ge 1$, we have $\|\partial_i P_t g \|_p \le
t^{-(p-1)/2p}e^{-t/p}  \| \partial_i  g \|_1^{1/p}.$

\item[(ii)] Assume further that $g$ is increasing. Then for $0< p
<1$, we have $\|\partial_i P_t g \|_p \ge t^{(1-p)/2p}e^{-t/p} \|
\partial_i g \|_1^{1/p}.$
 \end{enumerate}
  \end{lemma}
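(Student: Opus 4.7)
The plan is to reduce both parts to the identity $\partial_i P_t g = e^{-t}P_t \partial_i g$ from \eqref{OU_fact:1}, combined with two elementary estimates on $h := \partial_i P_t g$: the $L^\infty(\mu^{\otimes n})$ bound $\|h\|_\infty \le 1/\sqrt t$, which is immediate from \eqref{OU_fact:2}, and an $L^1(\mu^{\otimes n})$ estimate reflecting the Markovian nature of $P_t$. Part (i) then falls out of the standard log-convexity interpolation of $L^p$ norms between $L^1$ and $L^\infty$, while part (ii) will use a truncation-style variant of the same inequality valid below $L^1$, combined with the positivity of $h$ enforced by the monotonicity of $g$.

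For part (i), my first step is to observe that $P_t$ is a contraction on $L^1(\mu^{\otimes n})$: indeed $|P_t \varphi| \le P_t |\varphi|$ pointwise and $P_t$ preserves $\mu^{\otimes n}$, so integrating gives $\|P_t \varphi\|_1 \le \|\varphi\|_1$. This yields $\|h\|_1 = e^{-t}\|P_t \partial_i g\|_1 \le e^{-t}\|\partial_i g\|_1$. The pointwise bound $|h|^p \le |h|\cdot \|h\|_\infty^{p-1}$, valid for $p \ge 1$, integrates to the Lyapunov inequality
\[
\|h\|_p \le \|h\|_1^{1/p}\,\|h\|_\infty^{1-1/p},
\]
and substituting the two preceding estimates gives the claimed bound $\|h\|_p \le t^{-(p-1)/(2p)}\,e^{-t/p}\,\|\partial_i g\|_1^{1/p}$.

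For part (ii), the additional input is that $g$ being increasing forces $\partial_i g \ge 0$, so $h = e^{-t}P_t \partial_i g \ge 0$ by positivity of $P_t$ (equivalently \eqref{OU_fact:3}); since $P_t$ preserves $\mu^{\otimes n}$-expectations, this sharpens the $L^1$ estimate above to the equality $\|h\|_1 = e^{-t}\|\partial_i g\|_1$. For $p \in (0,1)$ there is no direct log-convex interpolation between $L^0$ and $L^1$, so instead I would use the pointwise inequality $h = h^p \cdot h^{1-p} \le h^p\,\|h\|_\infty^{1-p}$, which on integrating gives
\[
\|h\|_1 \le \|h\|_\infty^{1-p}\,\|h\|_p^p.
\]
Rearranging, substituting $\|h\|_\infty \le 1/\sqrt t$ and the exact $L^1$ identity, yields $\|h\|_p^p \ge t^{(1-p)/2}\,e^{-t}\,\|\partial_i g\|_1$, and the claim follows after taking $p$-th roots. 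Neither step is a serious obstacle; the only point requiring care is the $p \in (0,1)$ case, which cannot be handled by straightforward $L^p$ interpolation, and it is precisely the combination of pointwise nonnegativity of $h$ (coming from the monotonicity hypothesis on $g$) with the $L^\infty$ smoothing bound \eqref{OU_fact:2} that enables the argument to close.
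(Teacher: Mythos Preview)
Your proof is correct and follows essentially the same route as the paper: both parts rest on the commutation identity \eqref{OU_fact:1}, the $L^\infty$ smoothing bound \eqref{OU_fact:2}, the $L^1$ contraction (resp.\ equality under monotonicity) for $P_t$, and the pointwise interpolation $|h|^p \le |h|\cdot\|h\|_\infty^{p-1}$ for $p\ge 1$ (resp.\ $h \le h^p\|h\|_\infty^{1-p}$ for $0<p<1$). The paper's proof is identical in substance, only written slightly more tersely.
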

  \begin{proof}
 \noindent (i)  By \eqref{OU_fact:1} and \eqref{OU_fact:2}, we have
 \[ \|\partial_i P_t g \|_p^p \le   t^{-(p-1)/2}\|\partial_i P_t g \|_1
 = t^{-(p-1)/2}e^{-t}  \| P_t(\partial_i  g) \|_1 \le
t^{-(p-1)/2}e^{-t}  \| \partial_i  g \|_1,\]
 where in the last inequality we use the fact that $P_t:L^{1}(\mu^{\otimes n}) \to L^{1}(\mu^{\otimes n})$ is a contraction.

  \noindent (ii) Again using \eqref{OU_fact:1} and \eqref{OU_fact:2}, we obtain
  \[ \| \partial_i P_t g \|_1 \le t^{ - (1-p)/2}  \|\partial_i P_t g \|_p^p. \]
Note that since $g$ is increasing,
\[ \| \partial_i P_t g \|_1 = \E_\mu[ \partial_i P_t g ] = e^{-t} \E_\mu[  P_t \partial_i g ] = e^{-t} \E_\mu[ \partial_i g ] = e^{-t} \| \partial_i g\|_1.\]
Hence, we have $ e^{-t} \| \partial_i  g \|_1 \le t^{ - (1-p)/2}  \|\partial_i P_t g \|_p^p$, as desired.
  \end{proof}

\subsection{An Alternative Refined Gaussian FKG Inequality}


In order to prove Theorem~\ref{thm:alt_bound_fn}, we need the
following identity for the covariance of a pair of functions
w.r.t.\ the Gaussian measure, which follows from~\cite[Lemma
3.3]{Sourav} using the polarization identity: $2\cov(f, g) =
\var(f+g)  - \var(f) - \var(g)$.
\begin{proposition} \label{prop:FKG_error}
Let $f, g: \Real^n \to \Real$ be two absolutely continuous
functions and suppose that $ \| \nabla f\|_2^2 ,  \| \nabla
g\|_2^2 \in L^2 (\mu^{\otimes n})$. Then
\begin{equation}\label{FKG_error}
\E_\mu[fg]  - \E_\mu[f] \E_\mu[g] =  \sum_{i=1}^n \int_0^{\infty}
e^{-t}\E_\mu \big[ \partial_i f P_{t} \partial_i g \big]  dt.
 \end{equation}
\end{proposition}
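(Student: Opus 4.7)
The plan is to derive the bilinear identity \eqref{FKG_error} from the corresponding quadratic (variance) representation supplied by \cite{Sourav} combined with the polarization formula $2\cov(f,g)=\var(f+g)-\var(f)-\var(g)$. Lemma~3.3 of \cite{Sourav} asserts that, under the integrability hypothesis $\|\nabla h\|_2^2\in L^2(\mu^{\otimes n})$, every absolutely continuous $h:\Real^n\to\Real$ satisfies
\begin{equation*}
\var(h)\;=\;\sum_{i=1}^n\int_0^\infty e^{-t}\,\E_\mu\bigl[\partial_i h\cdot P_t\partial_i h\bigr]\,dt.
\end{equation*}
The first step is to verify that this variance formula applies in turn to each of $f$, $g$, and $f+g$. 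For $f$ and $g$ this is immediate from the hypothesis of the proposition, and for $f+g$ it follows from the pointwise bound $|\nabla(f+g)|^2\le 2(|\nabla f|^2+|\nabla g|^2)$, which forces $\|\nabla(f+g)\|_2^2\in L^2(\mu^{\otimes n})$ as well.

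Next I would expand the variance identity applied to $f+g$ and distribute:
\begin{equation*}
\var(f+g)=\sum_{i=1}^n\int_0^\infty e^{-t}\,\E_\mu\bigl[(\partial_i f+\partial_i g)(P_t\partial_i f+P_t\partial_i g)\bigr]\,dt.
\end{equation*}
The two diagonal terms produced after distribution reproduce $\var(f)+\var(g)$ by another application of the same identity, leaving only the two cross terms. The key observation is that the Ornstein--Uhlenbeck semigroup $P_t$ is self-adjoint on $L^2(\mu^{\otimes n})$, so $\E_\mu[\partial_i f\cdot P_t\partial_i g]=\E_\mu[\partial_i g\cdot P_t\partial_i f]$ for every $i$ and every $t\ge 0$. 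Hence the two cross terms coincide, yielding
\begin{equation*}
\var(f+g)-\var(f)-\var(g)\;=\;2\sum_{i=1}^n\int_0^\infty e^{-t}\,\E_\mu\bigl[\partial_i f\cdot P_t\partial_i g\bigr]\,dt,
\end{equation*}
after which dividing by two and invoking the polarization identity delivers~\eqref{FKG_error}.

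The main (and essentially only) technical obstacle is the justification of the rearrangements: one must guarantee absolute convergence of the outer sum and integral so that distributing under the integrand and exchanging inner products via self-adjointness are legitimate. This is precisely where the hypothesis $\|\nabla f\|_2^2,\|\nabla g\|_2^2\in L^2(\mu^{\otimes n})$ enters; by Cauchy--Schwarz inside the expectation, the Cauchy--Schwarz sum inequality, and the $L^2(\mu^{\otimes n})$-contractivity of $P_t$,
\begin{equation*}
\sum_{i=1}^n\int_0^\infty e^{-t}\,\E_\mu\bigl|\partial_i f\cdot P_t\partial_i g\bigr|\,dt \;\le\; \bigl\||\nabla f|\bigr\|_2\,\bigl\||\nabla g|\bigr\|_2\;<\;\infty,
\end{equation*}
so Fubini and linearity apply without issue. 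Once Lemma~3.3 of \cite{Sourav} is accepted as a black box, the remainder of the argument is purely bookkeeping.
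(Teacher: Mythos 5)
Your argument is correct and is essentially the paper's own proof: the paper likewise obtains \eqref{FKG_error} directly from Lemma~3.3 of \cite{Sourav} combined with the polarization identity $2\cov(f,g)=\var(f+g)-\var(f)-\var(g)$, leaving the expansion, the self-adjointness of $P_t$, and the integrability checks implicit. Your verification that $\|\nabla(f+g)\|_2^2\in L^2(\mu^{\otimes n})$ and the Cauchy--Schwarz/contractivity bound justifying the rearrangements simply fill in those routine details.
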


Note that if $f,g$ are increasing, then the RHS is clearly
non-negative, and hence, $\E_\mu[fg]  - \E_\mu[f] \E_\mu[g] \geq
0$.  This already implies the Gaussian FKG inequality~\cite{FKG}.
Moreover, the proposition gives a precise expression for $\cov(f,
g)$. However, as the precise expression is not so convenient to
work with, we replace it by a more convenient lower bound to
obtain Theorem~\ref{thm:alt_bound_fn}.

\begin{proof}[Proof of Theorem~\ref{thm:alt_bound_fn}]
First of all, note that since $f$ are increasing and $|f| \le 1$,
we have   $ \int_{\Real^n} \partial_i f(z) dz \le 2$ for all $i$.
Hence, $\E_\mu[\partial_i f] \le 1$. The same conclusion also
holds for $g$.

To prove the theorem, we will use Borell's reverse
hypercontractive inequality~\cite{Borell} which  implies the
following result. (See Corollary 3.3
of~\cite{Mossel-Inverse} for a discrete version of the result. The
Gaussian version presented here follows immediately by a CLT
argument.) Let $f_1, f_2 : \Real^n \to \Real_+$ be smooth bounded
functions, then for any $p, q \in (0, 1)$ such that $e^{-2t} \le
(1-p)(1-q)$, the following inequality holds:
\begin{equation} \label{eq: rev_hyp}
 \E_\mu [f_1P_t f_2]  \ge \| f_1\|_p \|f_2 \|_q.
 \end{equation}
 Here the norms are taken w.r.t.\ the Gaussian measure $\mu^{\otimes n}$.
Fix $1 \le i \le n$. Using \eqref{OU_fact:1} and the fact that
$P_t$ is reversible w.r.t.\ $\mu^{\otimes n}$, we have
\begin{align}
\int_0^{\infty} e^{-t}\E_\mu \big[ \partial_i f P_{t} \partial_i g \big]  dt &\ge \int_1^{\infty} e^{-(t-1)}\E_\mu \big[ \partial_i P_{1/2} f P_{t-1} \partial_i P_{1/2}g \big]  dt \notag\\
&=  \int_0^{\infty} e^{-t}\E_\mu \big[ \partial_i P_{1/2} f P_{t} \partial_i P_{1/2}g \big]  dt \notag\\
&=  \int_0^{1} \E_\mu \big[ \partial_i P_{1/2} f T_s \partial_i P_{1/2}g \big]  dt \quad [ T_s := P_{\log(1/s)}] \label{ineq:intergal1}.
\end{align}
By \eqref{eq: rev_hyp} and Lemma~\ref{eq:p_norm_bound}, we deduce that
\begin{equation}\label{eq:apply_rev_hyp}
\E_\mu \big[ \partial_i P_{1/2} f T_{s} \partial_i P_{1/2}g \big]  \ge \|\partial_i P_{1/2} f \|_p \|\partial_i P_{1/2} g \|_q \ge (2 e)^{ -( \frac{1}{2p} + \frac{1}{2q})} \|\partial_i  f \|_1^{1/p} \|\partial_i g \|_1^{1/q},
\end{equation}
for $s>0$ such that $s^2 \le (1-p)(1-q)$. Optimizing the RHS of
\eqref{eq:apply_rev_hyp}  over $p, q \in (0, 1)$ satisfying $ s^2
\le (1-p)(1-q)$, we obtain
\[ \E_\mu \big[ \partial_i P_{1/2} f T_{s} \partial_i P_{1/2}g \big]  \ge \exp \left( - \frac12 \frac{a_i^2 +2s a_i b_i+ b_i^2}{1-s^2}\right), \]
where $a_i, b_i>0$ are such that $(2e)^{-1/2} \|\partial_i  f \|_1 = e^{-a_i^2/2}$ and $(2e)^{-1/2} \|\partial_i  g\|_1  = e^{-b_i^2/2}$. Hence, by \eqref{ineq:intergal1},
\begin{align}
\int_0^{\infty} e^{-t}\E_\mu \big[ \partial_i f P_{t} \partial_i g \big]  dt &\ge \int_0^1  \exp \left( - \frac12 \frac{a_i^2 +2s a_i b_i+ b_i^2}{1-s^2}\right) ds \notag\\
&\ge  \eps  \exp \left( - \frac12 \frac{a_i^2 +2 \eps a_i b_i+ b_i^2}{1-\eps^2}\right), \label{ineq:lbound1}
\end{align}
for any $\eps \in (0,1)$.  We are interested in finding a lower bound of the RHS of \eqref{ineq:lbound1} when $a_i$ and $b_i$ are large.
Note that the derivative of the RHS  of \eqref{ineq:lbound1} as a function of $\eps$ vanishes approximately at $\eps \approx 1/a_ib_i$.  Plugging in $\eps = 1/a_ib_i$ in \eqref{ineq:lbound1}, we obtain
\begin{align}  \frac{1}{a_ib_i}   &\exp \left( - \frac12 \frac{a_i^2 +2 \eps a_i b_i+ b_i^2}{1-\eps^2}\right) \ge   \frac{1}{a_ib_i}   \exp \left( - \frac12  ( a_i^2 +2 \eps a_i b_i+ b_i^2)(1+\eps^2 + O(\eps^4))\right) \notag \\
&\ge   \frac{c_1}{a_i b_i}  e^{-(a_i^2+b_i^2)/2} \ge c_1 e \cdot
\frac{ \|\partial_i f\|_1 \|\partial_i g\|_1} {\sqrt{\log
(e/\|\partial_i f\|_1) \log (e/\|\partial_i g\|_1 )}},
\label{ineq:final}
\end{align}
where $c_1>0$ is a universal constant.  In the second inequality
above, we used the fact $\eps^2 (a_i^2 + b_i^2)  = O(1)$. This is
because  $a_i , b_i$ are bounded from below, which follows from
the fact  $\|\partial_i  f \|_1, \|\partial_i  g \|_1  \le 1$. Now we conclude the proof
by combining Proposition~\ref{prop:FKG_error} and the bounds
\eqref{ineq:lbound1} and \eqref{ineq:final} and by taking $c = c_1
e$.
\end{proof}

\subsection{A direct approach towards inverse Gaussian BKS}


In this subsection we aim to prove a  Gaussian analogue of the
inverse BKS theorem (see Proposition~1.3 of~\cite{BKS}).

\begin{proposition}\label{Cor:Converse-new}
Let  $f :\Real^{n} \to \Real$ be a continuously differentiable increasing function such that $ \| \nabla f\|_2^2  \in L^2 (\mu^{\otimes n})$. Then for any $\rho \in (0,1)$,
\[ \vg(f, \rho) \ge (1-\rho^2) \sum_{i=1}^n \E_\mu [\partial_i  f] ^2. \]
\end{proposition}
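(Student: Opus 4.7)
The plan is to express $\vg(f,\rho)$ as a covariance against the Ornstein-Uhlenbeck semigroup and then invoke the integral identity of Proposition~\ref{prop:FKG_error}. Choose $t>0$ with $e^{-t}=\sqrt{1-\rho^2}$. By the Mehler representation built into the definition of $P_t$, we have $P_t f(W)=\E[f(\rW)\mid W]$, so $\zg(f,\rho)=\E_\mu[f\cdot P_t f]$, and since $P_t$ leaves $\mu^{\otimes n}$ invariant, $\E_\mu[P_t f]=\E_\mu[f]$. Consequently,
\[
\vg(f,\rho)=\cov(f,P_t f).
\]

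I would then apply Proposition~\ref{prop:FKG_error} to the pair $(f,P_t f)$, using the commutation $\partial_i P_t f=e^{-t}P_t\partial_i f$ from \eqref{OU_fact:1} together with the semigroup identity $P_s P_t=P_{s+t}$ to rewrite
\[
\vg(f,\rho)=e^{-t}\sum_{i=1}^n\int_0^\infty e^{-s}\E_\mu\bigl[\partial_i f\cdot P_{s+t}\partial_i f\bigr]\,ds.
\]
The next step is to bound each integrand from below. Using the $\mu^{\otimes n}$-symmetry of $P_u$ and writing $P_u=P_{u/2}\circ P_{u/2}$, one obtains $\E_\mu[\partial_i f\cdot P_u\partial_i f]=\|P_{u/2}\partial_i f\|_2^2$. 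Since $f$ is increasing, $\partial_i f\ge 0$, hence $P_{u/2}\partial_i f\ge 0$, and combining Jensen's inequality with the invariance $\int P_{u/2}h\,d\mu^{\otimes n}=\int h\,d\mu^{\otimes n}$ gives
\[
\|P_{u/2}\partial_i f\|_2^2\ge\|P_{u/2}\partial_i f\|_1^2=\E_\mu[\partial_i f]^2.
\]

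Plugging this back into the integral, evaluating $\int_0^\infty e^{-s}\,ds=1$, and noting that $\sqrt{1-\rho^2}\ge 1-\rho^2$ for $\rho\in(0,1)$, I would conclude
\[
\vg(f,\rho)\ge e^{-t}\sum_{i=1}^n\E_\mu[\partial_i f]^2=\sqrt{1-\rho^2}\sum_{i=1}^n\E_\mu[\partial_i f]^2\ge(1-\rho^2)\sum_{i=1}^n\E_\mu[\partial_i f]^2,
\]
which is the desired inequality (in fact with a slightly better constant than stated). The proof is essentially obstacle-free; monotonicity of $f$ is used only to pass from the $L^2$-norm to the cleaner $L^1$-norm via Jensen, and the sole routine verification is that Proposition~\ref{prop:FKG_error} applies to $(f,P_t f)$, i.e.\ that $\|\nabla P_t f\|_2^2\in L^2(\mu^{\otimes n})$; this follows from the hypothesis on $f$ together with the commutation \eqref{OU_fact:1} and the $L^p$-contractivity of $P_t$.
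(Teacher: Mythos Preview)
Your proof is correct and follows essentially the same route as the paper: both use Proposition~\ref{prop:FKG_error} to reduce matters to the pointwise bound $\E_\mu[\partial_i f\,P_u\partial_i f]\ge\E_\mu[\partial_i f]^2$, and both arrive at the sharper constant $\sqrt{1-\rho^2}$ before weakening to $1-\rho^2$. The only minor variation is in how that pointwise bound is obtained: you argue directly via $\|P_{u/2}\partial_i f\|_2\ge\|P_{u/2}\partial_i f\|_1=\E_\mu[\partial_i f]$, whereas the paper shows $u\mapsto\E_\mu[\partial_i f\,P_u\partial_i f]$ is nonincreasing (by differentiation and integration by parts) and passes to the limit $u\to\infty$.
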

\begin{proof}
First we show that for  if $f$ is as given in the proposition, then
\begin{equation}\label{Eq:Converse-new1}
\var_\mu(f) \ge  \sum_{i=1}^n \E_\mu [\partial_i f] ^2.
\end{equation}
Taking $f=g$ in Proposition~\ref{prop:FKG_error}, we have
\begin{equation}\label{Eq:Converse-new2}
\var_\mu(f) =  \sum_{i=1}^n  \int_0^\infty e^{-t} \E_\mu[ \partial_i f P_t \partial_i f] dt.
\end{equation}
We claim that $\E_\mu[ \partial_i f P_t \partial_i f] $ is a nonincreasing function of $t$.  Indeed,
\[
\frac{d}{dt} \E_\mu[g P_t g]  =   \E_\mu[ g P_t  \sum_{j=1}^n ( \partial_j^2 g  -x_j \partial_j g)] =
\sum_{j=1}^n \E_\mu[ g_t  \partial_j^2 g  -   x_j g_t \partial_j g],
\]
where $g_t = P_t g$. Integration by parts yields
\[
\E_\mu[ g_t  \partial_j^2 g  -   x_j g_t \partial_j g] =
\E_\mu[ g  \partial_j^2 g  -   \partial_j ( g_t \partial_j g) ]  =
- \E_\mu[ \partial_j g_t \partial_j g] = - e^{-t} \E_\mu[\partial_j g P_t \partial_j g] \le 0,
\]
and hence,
\[
\frac{d}{dt} \E_\mu[g P_t g] = \sum_{j=1}^n \E_\mu[ g_t  \partial_j^2 g  -   x_j g_t \partial_j g] \leq 0.
\]
Therefore,
\begin{equation}\label{Eq:Converse-new3}
\E_\mu[ \partial_i f P_t \partial_i f]  \ge \E_\mu[ \partial_i f P_\infty \partial_i f]  =  \E_\mu[ \partial_i f ]^2.
\end{equation}
Combination of~\eqref{Eq:Converse-new2} with~\eqref{Eq:Converse-new3} yields~\eqref{Eq:Converse-new1}.

By~\eqref{Eq:Converse-new1} and~\eqref{OU_fact:1},
\[
\var_\mu(P_t f) \ge  \sum_{i=1}^n \E_\mu [\partial_i P_t f] ^2 =
\sum_{i=1}^n e^{-2t} \E_\mu [P_t \partial_i  f] ^2 =   e^{-2t} \sum_{i=1}^n  \E_\mu [\partial_i  f] ^2.
\]
Note that by the definition of the Orenstein-Uhlenbeck operator, we have
\begin{equation} \label{eq:OU_ns}
\vg(f,\rho) = \E_\mu [ f P_t f ] - \E_\mu [f]^2 = \var_\mu(P_t f).
\end{equation}
This completes the proof.
\end{proof}
As a corollary (which we will prove in the next section), we
obtain an inverse Gaussian BKS theorem for increasing functions.
\begin{corollary}\label{cor:gaussian_bks_inv}
Let  $\{A_\ell \subseteq \Real^{n_{\ell}}\}$ be a sequence of
increasing sets. If $\{A_\ell\}$ is asymptotically Gaussian noise
sensitive, then $\sum_{i=1}^{n_{\ell}} \ig_i(A_\ell)^2 \to 0$ as
$\ell \to \infty$.
\end{corollary}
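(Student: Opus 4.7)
The plan is to upgrade Proposition~\ref{Cor:Converse-new} from smooth increasing functions to characteristic functions of increasing sets, i.e.\ to establish the pointwise lower bound
\[
\vg(A,\rho)\;\ge\;(1-\rho^2)\sum_{i=1}^{n}\ig_i(A)^2,
\]
valid for every increasing Borel $A\subseteq\Real^n$ and every $\rho\in(0,1)$; given this, the corollary is immediate since asymptotic Gaussian noise sensitivity applied with any single fixed $\rho$ forces the right-hand side along $A_\ell$ to vanish. To get there, I would smooth by the Ornstein--Uhlenbeck semigroup: for $s\in(0,1/2]$ set $f_s:=P_s 1_A$. Since $P_s$ is $L^\infty$-contractive, sends measurable functions into $C^\infty$ with $|\nabla f_s|\le 1/\sqrt{s}$ by \eqref{OU_fact:2}, and preserves monotonicity, $f_s$ is a $C^\infty$ increasing function valued in $[0,1]$ with $\|\nabla f_s\|_2^2\in L^\infty\subseteq L^2(\mu^{\otimes n})$. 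Hence Proposition~\ref{Cor:Converse-new} applies and gives $\vg(f_s,\rho)\ge(1-\rho^2)\sum_{i=1}^{n}\E_\mu[\partial_i f_s]^2$.

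The next step is to identify $\E_\mu[\partial_i f_s]$ with a multiple of $\ig_i(A)$. By Gaussian integration by parts, $\E_\mu[\partial_i h]=\E_\mu[x_i h]$ for any $C^1$ bounded function $h$. Using reversibility of $P_s$ together with the eigenrelation $P_s(x_i)=e^{-s}x_i$, I get
\[
\E_\mu[\partial_i P_s 1_A]\;=\;\E_\mu[x_i P_s 1_A]\;=\;\E_\mu[P_s(x_i)\,1_A]\;=\;e^{-s}\E_\mu[x_i 1_A].
\]
For an increasing $A$ every fiber $A_i^{x}$ is a half-line $(a(x_{-i}),\infty)$ (with $a$ possibly $\pm\infty$), so $\mu^+(A_i^{x})=\phi(a(x_{-i}))$, and a one-line Fubini computation using $\int_a^\infty x_i\phi(x_i)\,dx_i=\phi(a)$ yields $\E_\mu[x_i 1_A]=\ig_i(A)$. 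Combining these gives the intermediate bound
\[
\vg(f_s,\rho)\;\ge\;(1-\rho^2)e^{-2s}\sum_{i=1}^{n}\ig_i(A)^2.
\]

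Finally I would pass to the limit $s\downarrow 0$. Because $f_s\to 1_A$ pointwise $\mu^{\otimes n}$-a.e.\ with $|f_s|\le 1$, bounded convergence gives $\zg(f_s,\rho)\to\zg(A,\rho)$ and $\E_\mu[f_s]\to\mu^{\otimes n}(A)$, hence $\vg(f_s,\rho)\to\vg(A,\rho)$, while the finite sum on the right converges to $(1-\rho^2)\sum_i\ig_i(A)^2$. This establishes the pointwise inequality and hence the corollary. The main delicate point is the identification $\E_\mu[\partial_i P_s 1_A]=e^{-s}\ig_i(A)$: the formal identity $\partial_i P_s 1_A=e^{-s}P_s\partial_i 1_A$ from \eqref{OU_fact:1} is not usable verbatim since $\partial_i 1_A$ is only a boundary measure, so the identification must be routed through Gaussian integration by parts together with the monotone-set identity $\E_\mu[x_i 1_A]=\ig_i(A)$, which is exactly where the hypothesis that $A_\ell$ is increasing enters.
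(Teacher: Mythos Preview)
Your proof is correct and follows the same overall strategy as the paper: smooth $1_A$ by the Ornstein--Uhlenbeck semigroup, apply Proposition~\ref{Cor:Converse-new} to $f_s=P_s 1_A$, and let $s\downarrow 0$. The one place where you diverge from the paper is in identifying $\E_\mu[\partial_i P_s 1_A]$ with the geometric influence. The paper invokes Lemma~\ref{l:partial_deri_OU_geo_inf}, which computes the limit $s\to 0$ via an explicit integral representation of $\partial_i P_s 1_A$ and a change of variables. Your route---Gaussian integration by parts, reversibility, and the eigenrelation $P_s x_i=e^{-s}x_i$, followed by the fiberwise identity $\E_\mu[x_i 1_A]=\ig_i(A)$---is shorter and in fact yields the exact formula $\E_\mu[\partial_i P_s 1_A]=e^{-s}\ig_i(A)$, of which Lemma~\ref{l:partial_deri_OU_geo_inf} is the $s\to 0$ case. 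Both arguments use the monotonicity of $A$ at exactly this step: you need each fiber to be a half-line to evaluate $\int_{A_i^x} y\,\phi(y)\,dy=\phi(t_i(A;x^{(-i)}))$, while the paper needs Lemma~\ref{lem:discon} to pass to the limit.
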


\section{Smooth approximation of characteristic functions of monotone sets}\label{smooth_approx}
In this section, we prove a result that connects the partial derivative of  the characteristic function  of an increasing set after being smoothed by the action of Ornstein-Uhlenbeck operator $P_t$   to its geometric influence as $t \downarrow 0$.  This will help us in deriving various theorems presented in the introduction, which involve sets,   from the respective theorems involving  $C^1$ functions.

 Recall that as defined in the
introduction, for any set $A \subseteq \mathbb{R}^n$, for each $ 1
\leq i \leq n$ and an element $x =(x_1, x_2, \ldots, x_n) \in
\mathbb{R}^n$, the restriction of $A$ along the fiber of $x$ in
the $i$-th direction is given by
\[
A^{x}_i := \{ y \in \mathbb{R}: (x_1, \ldots,  x_{i-1}, y,
x_{i+1}, \ldots, x_n) \in A \}.
\]
\begin{definition}\label{def:increasing}
A set $A \subset \mathbb{R}^n$ is called increasing (decreasing)
if its characteristic function $\1_A$ is an increasing
(decreasing) function in each coordinate. For any increasing set $A \subset \Real^n$
and for any $x \in \Real^n$, define
\[
t_i(A;x^{(-i)}) := \inf \{ y : y \in A^x_i\} \in [- \infty,  \infty],
\]
where $x^{(-i)} = (x_1, x_2, \ldots, x_{i-1}, x_{i+1}, \ldots,
x_n) \in  \Real^{n-1}$ and we use the convention that the infimum
of the empty set is $+\infty$.
\end{definition}
Note that $t_i(A; \cdot)$ is a decreasing function of $x^{(-i)} $
for any increasing set $A$. Also, for an increasing set $A$,  its geometric influence is given by $\ig_i(A)= \int_{ \Real^{n-1}} \phi(t_i(A;z^{(-i)})) \mu^{ \otimes n-1} ( dz^{(-i)})$.
%

\begin{lemma} \label{l:partial_deri_OU_geo_inf}
Let $A$ be a monotone subset of $\mathbb R^n$. Then, for each $i \in \{ 1, \ldots, n\}$,  we have
\[   \E_\mu[ \partial_ i P_t \1_A]  \to \ig_i(A) \quad \text{as }\  \  t \downarrow 0.\]
\end{lemma}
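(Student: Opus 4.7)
The plan is to prove the exact identity $\E_\mu[\partial_i P_t \1_A] = e^{-t}\ig_i(A)$ for every $t>0$, from which the claimed limit follows trivially since $e^{-t}\to 1$. Without loss of generality take $A$ increasing (the decreasing case is symmetric).

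First, since $\1_A$ is bounded and Borel, $h:=P_t\1_A$ is $C^\infty$ with bounded derivatives, so the standard Gaussian integration-by-parts formula $\E_\mu[\partial_i h] = \E_\mu[h\,X_i]$ applies to $h$, giving
$$\E_\mu[\partial_i P_t \1_A] = \E_\mu[(P_t \1_A)\, X_i].$$
Next I would use two standard features of the Ornstein--Uhlenbeck semigroup: $P_t$ is $\mu^{\otimes n}$-self-adjoint, and the coordinate function $X_i$ is an eigenfunction with eigenvalue $e^{-t}$ (both of which follow immediately from the integral representation of $P_t$, using $\int y_i\,d\mu^{\otimes n}(y)=0$ for the eigenvalue statement). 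Consequently,
$$\E_\mu[(P_t \1_A)\, X_i] = \E_\mu[\1_A\, P_t X_i] = e^{-t}\,\E_\mu[\1_A X_i].$$

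The remaining step is to identify $\E_\mu[\1_A X_i]$ with $\ig_i(A)$ by a direct Fubini computation. For increasing $A$ and $\mu^{\otimes(n-1)}$-almost every $x^{(-i)}$, the fiber $A_i^x$ is a half-line $[t_i(A;x^{(-i)}),\infty)$ (allowing the extreme values $t_i=\pm\infty$ for empty or full fibers), so $\mu^+(A_i^x)=\phi(t_i(A;x^{(-i)}))$ under the convention $\phi(\pm\infty)=0$. The elementary identity $\int_a^\infty u\phi(u)\,du = \phi(a)$ then yields
$$\E_\mu[\1_A X_i] = \int_{\Real^{n-1}} \phi(t_i(A;x^{(-i)}))\, d\mu^{\otimes(n-1)}(x^{(-i)}) = \ig_i(A).$$

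Chaining the three displays produces $\E_\mu[\partial_i P_t \1_A] = e^{-t}\ig_i(A)$, and the lemma follows at once by letting $t\downarrow 0$. There is no serious obstacle here: the only subtlety is that the natural integration by parts for $\1_A$ would only be a distributional identity, but smoothing by $P_t$ converts it into a classical one to which \eqref{OU_fact:1}, \eqref{OU_fact:2} and the Gaussian IBP apply directly. In fact the approach yields more than the asserted limit -- it gives an exact formula in $t$, with the convergence coming for free.
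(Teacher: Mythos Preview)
Your argument is correct and in fact cleaner than the paper's. The paper computes $\E_\mu[\partial_i P_t\1_A]$ by writing $P_t\1_A(x)$ explicitly as $\E_W[\bar\Phi((t_i(A;Y_t^{(-i)})-e^{-t}x_i)/\sqrt{1-e^{-2t}})]$ with $Y_t=e^{-t}x+\sqrt{1-e^{-2t}}W$, differentiating under the expectation, integrating over $x$, performing a change of variable, and then invoking a separate lemma (that the set of discontinuities of a monotone function on $\Real^n$ is Lebesgue-null) to pass to the limit $t\downarrow 0$ via bounded convergence. This yields only the limiting statement.

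Your route---Gaussian integration by parts on the smooth function $P_t\1_A$, self-adjointness together with $P_tX_i=e^{-t}X_i$, and the one-line Fubini computation $\E_\mu[\1_A X_i]=\int\phi(t_i(A;x^{(-i)}))\,d\mu^{\otimes(n-1)}=\ig_i(A)$---bypasses the discontinuity lemma entirely and delivers the stronger exact identity $\E_\mu[\partial_i P_t\1_A]=e^{-t}\ig_i(A)$ for every $t>0$. The paper's calculation has the minor advantage of being more ``hands-on'' with the Mehler kernel, but your approach is shorter, requires no auxiliary regularity lemma, and the exact formula is a genuine bonus.
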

\begin{remark}
Lemma~\ref{l:partial_deri_OU_geo_inf} does not hold in general without the monotonicity assumption. For example, take $n=1$ and define  $A = \mathbb Q$, the set of rational numbers. Then $ P_t \1_A  = 0$ for any $t>0$ and hence  $ \lim_{ t \to 0+} \E_\mu[ \partial_ i P_t \1_A] = 0$  but it can be easily checked that  $\ig_1(A)  = \infty$.
\end{remark}

In order to prove Lemma~\ref{l:partial_deri_OU_geo_inf} we need the following standard lemma. For sake of completeness, we present its proof.

\begin{lemma} \label{lem:discon}
Let  $f: \Real^n \to \Real$ be a  monotone function. Then the set
of discontinuities of $f$ has Lebesgue measure zero.
\end{lemma}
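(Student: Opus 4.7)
The plan is to trap the discontinuity set of $f$ inside a countable union of topological boundaries of level sets, and then to prove by induction on $n$ that the topological boundary of any coordinate-wise monotone subset of $\mathbb R^n$ has Lebesgue measure zero. For the first step, set $A_q := \{y \in \mathbb R^n : f(y) \leq q\}$ for $q \in \mathbb Q$; if $x \notin \partial A_q$ for every rational $q$, then $x \in \mathrm{int}(A_q) \cup \mathrm{int}(A_q^c)$ for each such $q$, and the real number $c := \sup\{q \in \mathbb Q : x \in \mathrm{int}(A_q^c)\}$ must coincide with $\inf\{q \in \mathbb Q : x \in \mathrm{int}(A_q)\}$ (otherwise any rational strictly between them would force $x$ into some $\partial A_q$). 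Then $f \leq q$ holds on a neighbourhood of $x$ for every rational $q > c$, and $f > q$ holds on a neighbourhood of $x$ for every rational $q < c$, so $f$ is continuous at $x$ with value $c$. This shows that the discontinuity set of $f$ is contained in $\bigcup_{q \in \mathbb Q} \partial A_q$.

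Each $A_q$ is coordinate-wise monotone, so it remains to prove by induction on $n$ that any coordinate-wise monotone $A \subseteq \mathbb R^n$ has $|\partial A| = 0$. The base case $n = 1$ is immediate since $A$ is a half-line. For the inductive step, assume without loss of generality that $A$ is coordinate-wise decreasing, and define $\phi : \mathbb R^{n-1} \to [-\infty, +\infty]$ by $\phi(y) := \sup\{t \in \mathbb R : (t,y) \in A\}$; this $\phi$ is coordinate-wise monotone, so by the inductive hypothesis (applied, e.g., to $\arctan \phi$ to sidestep infinite values) its discontinuity set $D_\phi \subseteq \mathbb R^{n-1}$ has $(n-1)$-dimensional Lebesgue measure zero. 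I would then verify the inclusion $\partial A \subseteq \{(x_1, y) : \phi_*(y) \leq x_1 \leq \phi^*(y)\}$, where $\phi_*, \phi^*$ denote the lower and upper semicontinuous envelopes of $\phi$: if $x_1 < \phi_*(y)$, choose $s \in (x_1, \phi_*(y))$; then $\phi > s$ on some neighbourhood $U$ of $y$, so by monotonicity the open box $(-\infty, s) \times U$ lies entirely inside $A$, placing $(x_1, y) \in \mathrm{int}(A)$. The case $x_1 > \phi^*(y)$ is symmetric. Restricting to a cube $[-R,R]^n$ and applying Fubini,
\[
|\partial A \cap [-R,R]^n| \;\leq\; \int_{[-R,R]^{n-1}} \min\bigl(2R,\; \phi^*(y) - \phi_*(y)\bigr)\, dy,
\]
which vanishes because $\phi^* = \phi_*$ off $D_\phi$; sending $R \to \infty$ completes the induction.

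The step I expect to be the main obstacle is verifying that $\partial A$ is genuinely trapped between the envelopes $\phi_*$ and $\phi^*$, together with the bookkeeping around the regions where $\phi$ takes the values $\pm\infty$ (corresponding to full slabs sitting inside $A$ or $A^c$, which contribute nothing to $\partial A$). Once these technicalities are settled, the remainder is a routine application of Fubini and the inductive hypothesis.
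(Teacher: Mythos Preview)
Your argument is correct, but it takes a genuinely different route from the paper's. The paper foliates $\mathbb{R}^n$ by lines in the diagonal direction $(1,1,\dots,1)$: the restriction of a coordinate-wise monotone $f$ to any such line is a one-variable monotone function, and the oscillation of $f$ on the cube $x+[-\varepsilon,\varepsilon]^n$ is exactly $f(x+\varepsilon(1,\dots,1))-f(x-\varepsilon(1,\dots,1))$, so every discontinuity of $f$ is already a discontinuity of the diagonal restriction. Since a one-dimensional monotone function has only countably many discontinuities, each diagonal line meets the discontinuity set in a null set, and Fubini finishes in one stroke. By contrast, you slice along coordinate directions, which forces the two-step structure (level sets, then induction on $n$ via the envelope function $\phi$), because a single coordinate slice does not see the full monotonicity.

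Both arguments are sound; the paper's is shorter because the diagonal direction captures all $n$ monotonicity constraints simultaneously. Your level-set reduction and the inclusion $\partial A\subseteq\{\phi_*\le x_1\le\phi^*\}$ are standard and robust, and your handling of $\phi=\pm\infty$ and the appeal to the inductive hypothesis via $\arctan\phi$ are fine. One small point worth making explicit: when you invoke the inductive hypothesis on $D_\phi$, you are really combining your Step~1 (applied in dimension $n-1$ to $\arctan\phi$) with the inductive claim about boundaries of monotone sets in $\mathbb{R}^{n-1}$; the logic is well-founded, but it helps to say so. Also, in the Fubini step you may as well drop the truncation to $[-R,R]^n$ and integrate directly over $\mathbb{R}^{n-1}$, since $D_\phi$ is null regardless of the size of the fibers above it.
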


\begin{proof}
The $n$-dimensional space $\mathbb{R}^n$ can be represented as a
disjoint union of straight lines $\bigcup_{\{z \in
\mathbb{R}^n:z_n=0\}} l_z$, where each line is defined as $l_z =
z+t(1,1,\ldots,1)$, $t \in \mathbb{R}$. We would like to show
that the set of discontinuities of $f$ on each line $l_z$ is of
Lebesgue measure zero, and then, the assertion of the lemma would
follow by a standard application of Fubini's theorem.

For each such line $l_z$, the restriction of $f$ to $l_z$ can be
represented by a one-dimensional function $f_z: \mathbb{R} \to
\mathbb{R}$ defined by $f_z(a)=f(z+(a,a,\ldots,a))$. Note that if
$f$ is not continuous at some $x \in \ell_z$,  then
\[
\lim_{\eps \to 0+}  \left(  \sup_{ y \in x + [-\eps, \eps]^n}
f(y) - \inf_{ y \in x + [-\eps, \eps]^n} f(y) \right) >0,
\]
which implies, by monotonicity of $f$, that
\[
\lim_{\eps \to 0+} f(x+(\eps,\ldots,
\eps))-f(x-(\eps,\ldots,\eps)) \neq  0.
\]
Hence, each discontinuity $x$ of $f$ corresponds to a
discontinuity of the one-dimensional function
$f_{z}$. 
Therefore, the set of discontinuities of $f$ on a line $l_z$ can
be embedded into the set of discontinuities of the function $f_z$.
However, for a fixed $z$, $f_z$ is a monotone function on the real
line, and thus, the set of its discontinuities is countable, and,
in particular, of Lebesgue measure zero. Thus, the set of
discontinuities of $f$ on each line $l_z$ is of Lebesgue measure
zero, which completes the proof.
\end{proof}

\begin{proof}[Proof of Lemma~\ref{l:partial_deri_OU_geo_inf}]
Without loss of generality, we assume that $A$ is an increasing
set. Let $W = (W_1, \ldots, W_n)$ be a standard Gaussian vector on
$\mathbb R^n$ and define $Y_t = e^{-t} x + \sqrt{ 1- e^{-2t}} W$.
Then we can write
\[P_t \1_A(x) = \E_\mu[ \1_A( Y_t)] = \E_\mu \left[ \bar \Phi\left(\frac{t_i(A; Y_t^{(-i)})  - e^{-t} x_i}{\sqrt{1- e^{-2t}}} \right) \right], \]
which, taking partial derivative w.r.t. $x_i$,  yields
\[ \partial_i P_t \1_A(x) = \E  \frac{e^{-t} }{ \sqrt{1 - e^{-2t}}} \phi \left(  \frac{t_i(A; Y_t^{(-i)})  - e^{-t} x_i}{\sqrt{1- e^{-2t}}} \right) \ge 0. \]
Therefore,
\begin{align} \label{eq:derivative_i}
 \E_\mu[ \partial_ i P_t \1_A]  &=  \int_{\mathbb R^n}  \E_W  \left [ \frac{e^{-t} }{ \sqrt{1 - e^{-2t}}} \phi \left(  \frac{t_i(A; Y_t^{(-i)})  - e^{-t} x_i}{\sqrt{1- e^{-2t}}} \right) \right]  \prod_{j=1}^n \phi(x_j) dx  \notag\\
 &=  \E_W  \int_{\mathbb R^n}  \phi(u) \phi \left(  \frac{\sqrt{1- e^{-2t}} u + t_i(A; Y_t^{(-i)})  }{e^{-t}} \right) du  \cdot  \prod_{j \ne i} \phi(x_j)  dx^{(-i)},
 \end{align}
 where in the last step we make a change of variable $ u =  \frac{ e^{-t} x_i  - t_i(A; Y_t^{(-i)})  }{\sqrt{1- e^{-2t}}}$.
Note that by Lemma~\ref{lem:discon}, we have $t_i(A; Y_t^{(-i)})
\to t_i(A; x^{(-i)})$ in distribution as $ t \to 0+$.  Hence,
taking limit as $t \to 0+$  in \eqref{eq:derivative_i}, we obtain,
by the Bounded Convergence Theorem,
 \[    \E_\mu[ \partial_ i P_t \1_A]   =  \int_{\mathbb R^n}   \phi \left(  t_i(A; x^{(-i)})  \right) du \cdot  \prod_{j \ne i} \phi(x_j)  dx^{(-i)}  = \ig_i(A). \]
  This completes the proof of the lemma.
%
\end{proof}
As a consequence of Lemma~\ref{l:partial_deri_OU_geo_inf},
Theorems~\ref{thm:gaussian_talagrand} and \ref{thm:alt_bound}
can now be easily derived from their functional counterparts.
\begin{proof}[Proof of Theorems~\ref{thm:gaussian_talagrand} and \ref{thm:alt_bound}]
For $t>0$, define $f_t = P_t \mathbf{1}_A$ and $g_t = P_t
\mathbf{1}_B$ for increasing sets $A, B$ of $\Real^n$. Note that $f_t$ and $g_t$ are increasing $C^\infty$ functions which are bounded by $1$. Thus we can apply Theorems~\ref{thm:alt_bound_fn} and  \ref{thm:gaussian_talagrand_func} with $f = f_t$ and $g_t$ and then let $t \to 0+$. In view of Lemma~\ref{l:partial_deri_OU_geo_inf}, the right hand sides of the inequalities  converge to appropriate quantities involving the geometric influences of the sets $A$ and $B$. Again by Lemma~\ref{l:partial_deri_OU_geo_inf}, $\1_A$ and $ \1_B$ are almost surely continuous,  hence   $f_t \to \1_A$ and   $g_t \to \1_B$ in probability. Therefore, $\E_\mu[ f_tg_t] - \E_\mu[ f_t] \E_\mu[g_t] \to \mu^{\otimes n} (A \cap B)  -\mu^{\otimes n} (A) \mu^{\otimes n} (B)$ by dominated convergence, which completes the proofs of the theorems.
\end{proof}

Note that above proof technique can not be immediately applied  to deduce Theorem~\ref{thm:Gaussian-Quantitative-BKS}  from Theorem~\ref{thm:Gaussian-Q-BKS_function} since Lemma~\ref{l:partial_deri_OU_geo_inf} does not hold for general non-monotone sets.  We overcome this obstacle by establishing a shifting lemma, which implies that it will be
sufficient to prove our theorem for  increasing sets. This shifting
lemma is a Gaussian analogue of Lemma 2.7 in~\cite{BKS}.
\begin{definition}
For $i \in \{1,2, \ldots, n\}$, the $i$-shift operator $M_i$
acting on subsets of $\Real^n$ is defined by:
\[
M_i(A) = \{ x \in \Real^{n} : x_i \ge {\bar \Phi}^{-1}(\mu(A^x_i))  \}.
\]
The shifting operator $M$ is defined as $M = M_1 \circ M_2 \circ
\ldots \circ M_n$.
\end{definition}

\begin{lemma}\label{lem:shifting}
Let $A \subseteq \Real^n$. For any $i \in \{1, 2, \ldots, n\}$ and
for any $\rho \in (0, 1)$, we have:
\begin{enumerate}
\item[(i)] $M (A)$ is  increasing.

\item[(ii)]$\mu^{\otimes n}(M(A)) =  \mu^{\otimes n}(A)$.

\item[(iii)] $\ig_i(M (A)) \le \ig_i(A)$.

\item[(iv)] $\zg(M(A), \rho) \ge \zg( A, \rho)$.

\end{enumerate}
\end{lemma}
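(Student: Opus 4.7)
My plan is to establish each of the four conclusions by first proving the analogous statement for every individual shift operator $M_j$ and then iterating over $j = n, n-1, \ldots, 1$. Specifically, for each $j$ I aim to show: (a) $M_j(A)$ is increasing in coordinate $j$ and preserves the property of being increasing in every other coordinate; (b) $M_j$ preserves the Gaussian measure of every $j$-fiber; (c) $\ig_i(M_j(A)) \le \ig_i(A)$ for every $i$; (d) $\zg(M_j(A),\rho) \ge \zg(A,\rho)$ for every $\rho \in (0,1)$. Iterating these statements over the composition $M = M_1 \circ \cdots \circ M_n$ then yields (i)--(iv).

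Parts (a), (b), and (d) are relatively straightforward. For (a), observe that if $A$ is increasing in a direction $k \ne j$ then $x \mapsto \mu(A_j^x)$ is nondecreasing in $x_k$, so by monotonicity of $\bar\Phi^{-1}$ the threshold defining $M_j(A)$ is nonincreasing in $x_k$; this preserves the increasing property in direction $k$ after shifting in direction $j$. Part (b) is immediate from the definition, and combined with Fubini it yields (ii). For (d), condition on all components of $(W,\rW)$ outside coordinate $j$: the residual one-dimensional problem asks one to compare $\E[\1_{A_j^W}(U)\1_{A_j^{\rW}}(V)]$ for a positively correlated Gaussian pair $(U,V)$ with its analogue after replacing both $A_j^W$ and $A_j^{\rW}$ by upper half-lines of the same Gaussian measures. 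The one-dimensional case of Borell's noise stability inequality says precisely that this replacement can only increase the quantity, so integrating recovers (d).

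The main obstacle is (c). The case $i = j$ follows immediately from the one-dimensional Gaussian isoperimetric inequality --- upper half-lines minimize $\mu^+$ among 1D Borel sets of prescribed Gaussian measure --- applied fiberwise and integrated over $x^{(-j)}$. The harder case $i \ne j$ reduces, by fixing the coordinates outside $\{i,j\}$, to a two-dimensional Gaussian Steiner-type inequality: for every Borel $B \subseteq \Real^2$ and its Ehrhard-symmetrization $M_j^{\mathrm{2D}}(B)$ in the second coordinate,
\[
\int_{\Real} \mu^+\bigl((M_j^{\mathrm{2D}}(B))_1^v\bigr) \, d\mu(v) \le \int_{\Real} \mu^+(B_1^v) \, d\mu(v).
\]
My approach is to smooth $\1_B$ by the Ornstein-Uhlenbeck semigroup $P_\eps$ so that the integrated directional Minkowski content becomes the $L^1$-norm $\|\partial_1 P_\eps \1_B\|_{L^1(\mu^{\otimes 2})}$, and then to use the identity $\partial_1 P_\eps = e^{-\eps} P_\eps \partial_1$ from \eqref{OU_fact:1}, together with the $L^1$-contractivity of $P_\eps$ and a fiberwise Polya--Szego--type comparison, to derive the bound for the symmetrized set before letting $\eps \downarrow 0$. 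Alternatively, one may approximate $B$ by finite unions of axis-aligned rectangles --- for which the inequality is elementary --- and pass to the limit. Once (c) and (d) are in hand for each $M_j$, iterating over $j$ completes the proofs of (iii) and (iv).
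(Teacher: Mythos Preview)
Your treatment of parts (i), (ii), and (iv) matches the paper's: (i) and (ii) are dismissed as standard (the paper cites~\cite{Frankl}), and (iv) is proved exactly as you describe, by conditioning on the coordinates outside $j$ and invoking Borell's one-dimensional isoperimetric/noise-stability inequality fiberwise.

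Part (iii), however, has a genuine gap. You attempt to show that each single $M_j$ decreases every $\ig_i$, and for $i \ne j$ you reduce to a two-dimensional directional Gaussian Steiner inequality. Neither of your proposed proofs of that 2D inequality is sound as stated. The Ornstein--Uhlenbeck route needs the identification of $\ig_1(B)$ with $\lim_{\eps \downarrow 0}\|\partial_1 P_\eps \1_B\|_{L^1(\mu^{\otimes 2})}$, but this fails for non-monotone $B$: the paper's own remark after Lemma~\ref{l:partial_deri_OU_geo_inf} shows that even the weaker convergence $\E_\mu[\partial_i P_t \1_A] \to \ig_i(A)$ requires monotonicity, and neither the original $A$ nor the intermediate set $M_j(A)$ (monotone only in direction $j$) satisfies this. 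In addition, the identity $\partial_1 P_\eps = e^{-\eps} P_\eps \partial_1$ is stated for smooth functions, not indicators, and $M_2$ does not commute with $P_\eps$ in any way that would let a Polya--Szeg\H{o} comparison for the smoothed function transfer back to the symmetrized set. The rectangle-approximation alternative is equally incomplete: the lower Minkowski content $\mu^+$ is not continuous under $L^1$-approximation of sets, so ``passing to the limit'' is unjustified.

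The paper avoids all of this by routing through the $h$-influence $I_i^h$ with $h(t)=\phi(\Phi^{-1}(t))$. It invokes two previously established facts: for any concave continuous $h$, the $h$-influence can only decrease under the full shift $M$ (\cite[Theorem~2.2]{keller11}); and for this particular $h$ one always has $\ig_i(A)\ge I_i^h(A)$, with equality when $A$ is monotone (\cite[Lemmas~3.5 and~3.7]{Geom-Influences}). Chaining these gives
\[
\ig_i(M(A)) = I_i^h(M(A)) \le I_i^h(A) \le \ig_i(A),
\]
the first equality holding because $M(A)$ is fully monotone. This argument never has to control $\ig_i$ at the partially-monotone intermediate stages $M_j\circ\cdots\circ M_n(A)$, which is precisely where your iteration breaks down.
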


\begin{proof}
The proofs of (i) and (ii) are standard (see \cite{Frankl}).

In order to prove (iii), we recall the notion of $h$-influences
defined in~\cite{keller11} and its relation to geometric
influences. For a function $h:[0,1] \to [0,1]$, the $h$-influence
of the $i$-th coordinate on $A$ (in the Gaussian space)  is defined as
\[
I^h_i(A) := \int h(\mu(A^{x}_i)) \mu^{\otimes n}(dx),
\]
where $A^{x}_i$ is the restriction of $A$ along the fiber of $x$
in the $i$-th direction. It was shown in previous work that:
\begin{itemize}
\item If the function $h$ is concave and continuous, then
$h$-influences of any set can only decrease under the action of
the shifting operator $M$ on that set (see Theorem 2.2 of
\cite{keller11}).

\item For $h(t) = \phi(\Phi^{-1}(t))$ (which is concave and
continuous), we have $\ig_i(A) \ge I_i^h(A)$ for any set $A$, and
$\ig_i(A) = I_i^h(A)$ for monotone increasing sets (see Lemmas 3.5
and 3.7 of~\cite{Geom-Influences}).
\end{itemize}
Combining these two facts, we have
\[
\ig_i(M (A)) =  I^h_i(M(A)) \le  I^h_i(A) \le \ig_i(A),
\]
as asserted in (iii).

To prove (iv),  it is sufficient to show that $\zg(M_j(A), \rho)
\ge \zg( A, \rho)$ for each $j \in \{1,2,\ldots, n\}$. Let $W, W'$
be two i.i.d.\ standard Gaussian vectors on $\Real^n$ and set $\rW
= \sqrt{1 - \rho^2} W + \rho W'$ (as defined above). We have
\begin{eqnarray}\label{Eq:Aux1}
&\zg( A, \rho)  = \E[ \1_{A^{W}_j}(W_j)  \1_{A^{\rW}_j}(\rW_j) ]  \notag\\
& =
\E_{W^{(-j)}, {W'}^{(-j)}} \left [ \E \big[ \1_{A^{W}_j}(W_j)
\1_{A^{\rW}_j}(\rW_j) \big| W^{(-j)}, {W'}^{(-j)} \big] \right].
\end{eqnarray}
By Borell's isoperimetric inequality~\cite{Borell2}, amongst all
pairs of subsets $S, T$ of the real line such that $\mu(S) = a$
and $\mu(T) = b$, the joint probability $\prob[ W_j \in S, \rW_j
\in T ]$ is maximized when $S = [\bar \Phi^{-1}(a), \infty)$ and
$T = [\bar \Phi^{-1}(b), \infty)$. This implies that
\begin{equation}\label{Eq:Aux2}
\E [ \1_{A^{x}_j}(W_j)  \1_{A^{y}_j}(\rW_j) ] \le
\E [ \1_{ M_j(A)^{x}_j}(W_j)  \1_{M_j(A)^{y}_j}(\rW_j) ]
\quad \forall x, y \in \Real^n.
\end{equation}
Assertion (iv) follows immediately by plugging
Equation~(\ref{Eq:Aux2}) into Equation~(\ref{Eq:Aux1}).
\end{proof}

\begin{proof}[Proof of
Theorem~\ref{thm:Gaussian-Quantitative-BKS}] By
Lemma~\ref{lem:shifting}, it is sufficient to prove the theorem
for increasing sets. Now we can follow the proof of
Theorems~\ref{thm:gaussian_talagrand} and \ref{thm:alt_bound}
to complete the proof. We omit the details.
\end{proof}

We point out that a Gaussian analogue of  the original BKS theorem  follows immediately from
Theorem~\ref{thm:Gaussian-Quantitative-BKS}.

\begin{corollary}
Let $A_\ell \subseteq \Real^{n_\ell}$ be a sequence of sets and
suppose that $\sum_{i=1}^{n_\ell} \ig_i(B)^2 \to 0$ as $\ell \to
\infty$. Then $\{ A_\ell\}$ is asymptotically Gaussian
noise-sensitive.
\end{corollary}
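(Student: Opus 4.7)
The plan is to apply Theorem~\ref{thm:Gaussian-Quantitative-BKS} term-by-term along the sequence and observe that the resulting bound vanishes for every fixed noise level. Concretely, I would fix an arbitrary $\rho \in (0,1)$ and invoke the quantitative bound
\[
\vg(A_\ell, \rho) \leq C_1 \left( \sum_{i=1}^{n_\ell} \ig_i(A_\ell)^2 \right)^{C_2 \rho^2}
\]
for each $\ell$. The exponent $C_2 \rho^2$ is a strictly positive constant (depending only on $\rho$), so the right-hand side is a fixed positive power of the quantity $\sum_i \ig_i(A_\ell)^2$, scaled by the universal constant $C_1$.

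Next, I would simply feed in the hypothesis. By assumption $\sum_{i=1}^{n_\ell} \ig_i(A_\ell)^2 \to 0$ as $\ell \to \infty$, so the right-hand side tends to zero, and therefore $\vg(A_\ell, \rho) \to 0$. Since $\rho \in (0,1)$ was arbitrary, the condition in~\eqref{eq:asymp_Gaussian_ns_condition} of Definition~\ref{def:GNS} is satisfied, so $\{A_\ell\}$ is asymptotically Gaussian noise-sensitive.

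There is essentially no technical obstacle here: all the substantive work has been absorbed into Theorem~\ref{thm:Gaussian-Quantitative-BKS}, and the corollary is just the qualitative reading of that quantitative statement. The only thing worth flagging is the trivial fact that $x \mapsto x^{C_2 \rho^2}$ is continuous at $0$ with value $0$, so that a vanishing base yields a vanishing power; this is what allows the limit to pass through the bound. Hence the corollary follows at once from Theorem~\ref{thm:Gaussian-Quantitative-BKS}.
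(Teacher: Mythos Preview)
Your proposal is correct and matches the paper's approach exactly: the paper simply states that this corollary ``follows immediately from Theorem~\ref{thm:Gaussian-Quantitative-BKS}'' without giving any further argument, and your write-up is precisely that immediate deduction.
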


\begin{proof}[Proof of Corollary~\ref{cor:gaussian_bks_inv}]
Again, we can follow the proof of
Theorems~\ref{thm:gaussian_talagrand} and \ref{thm:alt_bound}
to show that, for any increasing set $A \subseteq \Real^n$,
\[ \vg(A, \rho) \ge (1-\rho^2) \sum_{i=1}^n \ig_i(A)^2. \]
The assertion of the corollary follows immediately.
\end{proof}

\subsection{Comparison Between Theorems~\ref{thm:gaussian_talagrand}
and~\ref{thm:alt_bound}}
\label{sec:sub:tightness}

Let us compare the performances of
Theorems~\ref{thm:gaussian_talagrand}  and \ref{thm:alt_bound} in two important special cases.
\begin{itemize}
\item \textbf{Threshold sets in $\Real^n$.} Let $A = \{ x \in \Real^n:
n^{-1/2}\sum_{i=1}^{n} x_i > - t \}$ and $B = \{ x \in \Real^n:
n^{-1/2}\sum_{i=1}^{n} x_i > t\}$. In this case, $\mu(A)  = \eps$
and $\mu(B) = 1- \eps$ where $\eps = \Phi^{-1}(-t)$, and hence,
$\mu^{\otimes n}(A\cap B)  -  \mu^{\otimes n}(A) \mu^{\otimes
n}(B) = \eps^2$. It is easy to show that $\ig_i(A) = \ig_i(B)
\asymp n^{-1/2}\eps \sqrt{\log(1/\eps)}$ for each $i$. Thus,
Theorem~\ref{thm:gaussian_talagrand} gives a lower bound of
order $\eps^2$  whereas Theorem~\ref{thm:alt_bound} yields a
lower bound of order $\eps^2/\log n$. Therefore, in this example,
Theorem~\ref{thm:gaussian_talagrand} is  tight as $t \to \infty$
($\eps \to 0$) up to  a constant factor for any $n$, while
Theorem~\ref{thm:alt_bound} is off by a factor of $\log n$.

\item \textbf{Sets that depend on a single coordinate.} Let $n=1$
(which is equivalent to the case when both sets depend on a single
coordinate). In this case, Theorem~\ref{thm:alt_bound} is
strictly stronger than Theorem~\ref{thm:gaussian_talagrand}.
Indeed, for $n=1$, the bounds given by the theorems are (up to a
constant):
\[
 \frac{ \ig(A)\ig(B)} {\sqrt{\log (e/\ig(A)) \log (e/\ig(B) )}}
\qquad \mbox{ and } \qquad \frac{ \ig(A)\ig(B)} {\log (e/\ig(A)
\ig(B))}.
\]
Since
\[
\log(e/\ig(A) \ig(B)) \geq \frac{1}{2} \left(\log(e/\ig(A)) +
\log(e/\ig(B) \right),
\]
the left bound is always greater then the right one by the
inequality between the arithmetic and geometric means. Moreover,
it can be shown that the bound of Theorem~\ref{thm:alt_bound} is
asymptotically tight for any choice of the sets $A,B$, while
Theorem~\ref{thm:gaussian_talagrand} is not tight for
$A=(-t,\infty)$, $B=(e^t,\infty)$ as $t \to \infty$.
\end{itemize}


\section{Other Probability Spaces}
\label{sec:Discrete}

In this section, we show how one can use the Gaussian Talagrand
bounds  obtained in the previous sections to prove analogous
bounds for other product spaces, including all discrete product
spaces, the space $[0,1]^n$ endowed with the Lebesgue measure,
etc. Next we will deduce a BKS theorem for the product biased
measure on the discrete cube $\{-1, 1\}^n$ from its Gaussian
counterpart.  We should mention here that it is not clear if it is
possible to find a reduction from the Gaussian  BKS theorem to an
analogous BKS theorem for a general  discrete product space
$([q]^n, \gamma^{\otimes n})$. Indeed, while the Ornstein-Uhlenbeck
semigroup action is same as adding `small' amount of  noise to
every coordinate,  the standard noise operator (on a discrete
product space) amounts to adding `big' noise to a small number of
coordinates. That they are equivalent is far from obvious.

Since there is no single natural definition of influences for such spaces, we formulate
the results in terms of the $h$-influences defined in~\cite{keller11} (which turns out
to be the most natural way to state them), and then mention the formulation with
respect to more common definitions of influences. First we recall the definition of $h$-influences.
\begin{definition}
Let $\Omega$ be a probability space endowed with a probability measure $\gamma$.
For a function $h:[0,1] \to [0,1]$, the $h$-influence of the $i$-th coordinate on a set $A$ in the product space $(\Omega^n, \gamma^{\otimes n})$ is defined as
\[
I^h_i(A) := \mathbb{E}_{\gamma} [ h(\gamma(A^{x}_i))],
\]
where $A^{x}_i$ is the restriction of $A$ along the fiber of $x$
in the $i$-th direction and $\E_\gamma$, as always,  denotes the expectation w.r.t.\ the product measure $\gamma^{\otimes n}$.
\end{definition}

Throughout this section, we consider $h$-influences with respect
to the function $h(t)=\phi(\Phi^{-1}(t))$. For sake of simplicity,
we formulate the results for discrete probability spaces. The
results for other spaces, such as the space $[0,1]^n$ endowed with
the Lebesgue measure, can be derived similarly.

For $q>1$, let $[q]=\{1,2,\ldots,q\}$, and let $\gamma$ be a
probability measure on $[q]$. Without loss of generality, we
assume that $\gamma(i) >0$ for all $i \in [q]$ and denote the
smallest atom in $([q],\gamma)$ by $\alpha = \min_{ i \in [q]}
\gamma(i)$. In order to obtain the reduction from
$([q]^n,\gamma^{\otimes n})$ to $(\Real^n,\mu^{\otimes n})$, we
define $\psi : \Real \to [q]$ to be an increasing function such
that the push forward $\mu \circ \psi^{-1} $ has law $\gamma$. For
example, $\psi(u) = \min\{i \in [q]: F(i) > \Phi(u) \}$, where $F$
is the distribution function of $\gamma$. Define $\psi^{\otimes
n}: \Real^n \to [q]^n$ by $\psi^{\otimes n} (u_1, \ldots, u_n) =
(\psi(u_1), \ldots, \psi(u_n))$, and set $A_G := (\psi^{\otimes
n})^{-1}(A)$.

Obviously, $\mu^{ \otimes n}(A_G) = \gamma^{ \otimes n}(A)$ for any $A \subseteq [q]^n$. Moreover, a similar equality holds with respect to the restriction along fibers: If $u \in \Real^n$ such that $\psi^{\otimes n}(u) =J \in [q]^n$, then  the fibers $(A_G)^{u}_i$ and $A^J_i$ satisfy:
 \[ (A_G)^{u}_i = \psi^{-1} (A^J_i).\]
 Consequently, $\mu((A_G)^u_i ) = \mu(\psi^{-1} (A^J_i)) = \gamma(A^J_i)$. This allows us to relate
the geometric influences of $A_G$ to the $h$-influences of $A$. Indeed, it was shown
in~\cite{Geom-Influences} that for $h(t) = \phi(\Phi^{-1}(t))$, we have $\ig_i(B) \ge I_i^h(B)$ for any set $B \subseteq \Real^n$, and $\ig_i(B) = I_i^h(B)$ for monotone increasing sets (see Lemmas 3.5 and 3.7 of~\cite{Geom-Influences}). Hence, for any
$A \subseteq [q]^n$ and for any $1 \leq i \leq n$,
\begin{equation}\label{Eq5.1-new}
\ig_i(A_G) \geq I^h_i(A).
\end{equation}

This allows us to obtain analogues of Gaussian correlation bounds
for the product space $([q]^n, \gamma^{\otimes n})$.
\begin{theorem}\label{Thm:Discrete1}
Let $A, B$ be two increasing subsets of $[q]^n$. Then,
\[\begin{split}  &\gamma^{\otimes n}(A \cap B)  - \gamma^{\otimes n}(A)\gamma^{\otimes n}(B) \ge \\
 &c \max \left( \sum_{i=1}^n \frac{ I_i^h(A)I_i^h(B)} {\sqrt{\log (1/I_i^h(A)) \log (1/I_i^h(B) )}},
\varphi \Big(\sum_{ i=1}^n I_i^h(A)  I_i^h(B)\Big) \right),
\end{split}\]
where $c>0$ is a universal constant, $h(t)=\phi(\Phi^{-1}(t))$,
and $\varphi(x) = x/ \log(e/x)$.
\end{theorem}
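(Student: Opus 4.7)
The plan is to reduce Theorem~\ref{Thm:Discrete1} to the Gaussian correlation inequalities (Theorems~\ref{thm:gaussian_talagrand} and~\ref{thm:alt_bound}) by pulling back the sets $A, B \subseteq [q]^n$ to increasing subsets of $\Real^n$ via the map $\psi^{\otimes n}$ introduced just before the theorem statement. The key point is that the pull-back preserves all the relevant measure-theoretic quantities on both sides of the correlation inequality, and the two cases of the maximum in the conclusion will simply be the two Gaussian theorems applied in turn.

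More concretely, first I would set $A_G = (\psi^{\otimes n})^{-1}(A)$ and $B_G = (\psi^{\otimes n})^{-1}(B)$. Since $\psi:\Real\to[q]$ is increasing coordinatewise and $A, B$ are increasing, both $A_G$ and $B_G$ are increasing subsets of $\Real^n$. From $\mu\circ\psi^{-1}=\gamma$ and Fubini, one gets immediately
\[
\mu^{\otimes n}(A_G)=\gamma^{\otimes n}(A),\qquad \mu^{\otimes n}(A_G\cap B_G)=\gamma^{\otimes n}(A\cap B),
\]
and the analogous identities for $B$. Hence the left-hand sides of the Gaussian correlation bounds, when applied to $(A_G,B_G)$, equal the left-hand side of the conclusion of Theorem~\ref{Thm:Discrete1}.

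Next I would transfer the influences. Using the fiber identity $(A_G)^u_i=\psi^{-1}(A^J_i)$ recorded in the text (with $J=\psi^{\otimes n}(u)$), one has $\mu((A_G)^u_i)=\gamma(A^J_i)$, so for $h(t)=\phi(\Phi^{-1}(t))$ a change of variables yields $I^h_i(A_G)=I^h_i(A)$, where the left-hand side is the Gaussian $h$-influence. Because $A_G$ is increasing, the second bullet cited after \eqref{Eq5.1-new} gives $\ig_i(A_G)=I^h_i(A_G)$, and therefore $\ig_i(A_G)=I^h_i(A)$; similarly $\ig_i(B_G)=I^h_i(B)$. Plugging this into Theorem~\ref{thm:gaussian_talagrand} applied to $(A_G,B_G)$ gives the $\varphi$-branch of the bound, and plugging it into Theorem~\ref{thm:alt_bound} gives the other branch. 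Taking the maximum of the two bounds (valid since both hold simultaneously) produces the claimed inequality.

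The only non-trivial wrinkle is the cosmetic discrepancy between the $\log(e/\cdot)$ that appears naturally from Theorem~\ref{thm:alt_bound} and the $\log(1/\cdot)$ written in the statement of Theorem~\ref{Thm:Discrete1}. Since $h(t)=\phi(\Phi^{-1}(t))\le 1/\sqrt{2\pi}<1$ for every $t\in[0,1]$, every $h$-influence $I^h_i(A)$ is bounded by an absolute constant strictly less than $1$, so $\log(e/I^h_i(A))$ and $\log(1/I^h_i(A))$ differ at most by a bounded factor (and similarly for $B$); this is absorbed into the universal constant $c$. I expect this reduction to go through cleanly, and the main conceptual point that must be handled carefully is the chain $\ig_i(A_G)=I^h_i(A_G)=I^h_i(A)$, which is what makes the $h$-influence (rather than any other discrete influence notion) the right quantity to put on the right-hand side.
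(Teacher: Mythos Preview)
Your proposal is correct and follows essentially the same route as the paper: pull back to $A_G,B_G$ via $\psi^{\otimes n}$, apply Theorems~\ref{thm:gaussian_talagrand} and~\ref{thm:alt_bound}, and identify geometric influences with $h$-influences. The only cosmetic difference is that the paper invokes the inequality~\eqref{Eq5.1-new} together with the monotonicity of $x\mapsto x/\sqrt{\log(1/x)}$ and $x\mapsto x/\log(e/x)$, whereas you use the exact equality $\ig_i(A_G)=I^h_i(A)$ (valid since $A_G$ is increasing); either variant works.
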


\begin{proof}
Since the functions $x \mapsto x /\sqrt{\log(1/x)}$ and $x \mapsto
x / \log(e/x)$ are increasing in $(0,1)$, the assertion follows by
applying Theorem~\ref{thm:gaussian_talagrand}
and Theorem~\ref{thm:alt_bound} to the increasing sets $A_G,B_G \subseteq \Real^n$
coupled with the observation~\eqref{Eq5.1-new}.
\end{proof}

An interesting special case is the discrete cube $\{-1,1\}^n$
endowed with the product biased measure $\nu_\ga^{\otimes n}$, where  $\nu_\ga = \ga \delta_{1} + (1-\ga)
\delta_{-1}$  (w.l.o.g.\ for $0<\ga<1/2$). In this
case, the $h$-influence with $h(t)=\phi(\Phi^{-1}(t))$ satisfies
\[
I^h_i(A) = \ga \sqrt{\log(1/\ga)} I_i(A),
\]
where $I_i(A)$ is defined similarly to \eqref{def:inf} (but
instead of taking the expectation w.r.t.\ the uniform measure
$\nu^{\otimes n}$,  we use the product biased measure
$\nu_\alpha^{\otimes n}$). Hence, Theorem~\ref{Thm:Discrete1}
gives the bound
\[
\nu_\ga^{\otimes n}(A \cap B)- \nu_\ga^{\otimes n}(A)\nu_\ga^{\otimes n}(B) \geq c \varphi \left(\ga^2
\log(1/\ga) \sum_{i=1}^n I_i(A) I_i(B) \right),
\]
which was already shown in~\cite[Proposition
3.12]{Keller-Reduction}. We note that unlike the result
of~\cite{Keller-Reduction}, in Theorem~\ref{Thm:Discrete1} the
$h$-influences in the RHS appear without a ``scaling factor''
depending on $\ga$. This shows that in some sense, this
$h$-influence, which is the discrete variant of the geometric
influence, is more natural than the definition of influence used
in~\cite{Keller-Reduction} for the biased measure.

\medskip

In order to obtain an analogue of
Theorem~\ref{thm:Gaussian-Quantitative-BKS} for the biased cube $(\{-1,1\}^n, \nu_\ga^{\otimes n}) $, we need to find the
exact relation between Gaussian noise sensitivity and discrete
noise sensitivity (as defined in the introduction but now both $X$ and $X^\eta$ are distributed (marginally)  as $\nu_\alpha^{\otimes n}$).
\begin{lemma}\label{Lemma:5.1-new}
Consider the probability space $(\{-1, 1\}^n, \nu_\ga^{\otimes n})$. Let $A$ be a subset of $\{-1, +1\}^n$  and let $A_G$ be as defined above. Then
for any $\rho \in (0,1)$
\[
\vg(A_G,\rho) = \mathrm{VAR}(A,\eta),
\]
for $\eta = \eta(\rho,\alpha) = \frac{\prob[ W_1< \Phi^{-1}(\ga), W^\rho_1> \Phi^{-1}(\ga)]}{\ga(1-\ga)} $, where $(W_1, W^\rho_1)$ is a bivariate normal random vector with mean zero, unit  variance and correlation $\sqrt{1-\rho^2}$.
\end{lemma}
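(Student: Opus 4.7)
The plan is to couple the Gaussian pair $(W,\rW)$ to the discrete pair $(X,X^\eta)$ through the quantile map $\psi$ and then transfer the variance identity directly. Define $X := \psi^{\otimes n}(W)$ and $X' := \psi^{\otimes n}(\rW)$. By the defining property $\mu \circ \psi^{-1} = \nu_\alpha$, each of $X$ and $X'$ is distributed as $\nu_\alpha^{\otimes n}$. Because the coordinates of $(W, \rW)$ are i.i.d.\ pairs, so are the pairs $\{(X_i, X_i')\}_{i=1}^n$, each taking values in $\{-1, 1\}^2$.

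The crux of the argument is to show that for each $i$, $(X_i, X_i')$ has the same law on $\{-1,1\}^2$ as $(X_i, X_i^\eta)$ for the value of $\eta$ given in the statement. Both distributions have marginals $\nu_\alpha$; moreover, both are exchangeable under swapping the two coordinates. The exchangeability on the Gaussian side follows from the fact that $(W_i, \rW_i)$ is a centered bivariate normal with equal variances, and $\psi$ is applied identically to both entries; on the discrete side it is immediate from the symmetric definition of $X^\eta$. Any exchangeable distribution on $\{-1, 1\}^2$ with marginals $\nu_\alpha$ is determined by a single parameter, say the disagreement probability $p := \prob[X_i \ne X_i']$. A direct computation from the definition of $X^\eta$ yields $\prob[X_i \ne X_i^\eta] = 2\eta \alpha(1-\alpha)$, while on the Gaussian side $p = 2\,\prob[W_i < t,\, \rW_i > t]$, where $t$ is the jump point of $\psi$; after invoking the symmetry $(W_i, \rW_i) \stackrel{d}{=} (-W_i, -\rW_i)$, one may take $t = \Phi^{-1}(\alpha)$. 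Equating the two expressions for $p$ gives exactly the value of $\eta$ in the statement.

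Combining the coordinatewise identification with independence across coordinates yields $(X, X') \stackrel{d}{=} (X, X^\eta)$. Since $\1_{A_G}(u) = \1_A(\psi^{\otimes n}(u))$ for every $u \in \Real^n$, we obtain
\[
\zg(A_G, \rho) = \E[\1_{A_G}(W) \1_{A_G}(\rW)] = \E[\1_A(X) \1_A(X')] = Z(A, \eta),
\]
and similarly $\E[\1_{A_G}(W)] = \mu^{\otimes n}(A_G) = \nu_\alpha^{\otimes n}(A) = \E[\1_A(X)]$. Subtracting the squared mean from both sides then gives $\vg(A_G, \rho) = \mathrm{VAR}(A, \eta)$, as claimed. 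The only mildly subtle point is the one-parameter identification of the two bivariate laws on $\{-1,1\}^2$; there is no serious obstacle here beyond careful bookkeeping of the marginals and the symmetry.
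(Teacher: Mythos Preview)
Your proposal is correct and follows essentially the same route as the paper: push forward the Gaussian pair $(W,\rW)$ through $\psi^{\otimes n}$, observe that the resulting coordinate pairs are i.i.d.\ with the right marginals, and pin down the single remaining parameter of the joint law on $\{-1,1\}^2$ to identify $\eta$. The only cosmetic difference is that the paper matches the probability $\prob[X_1=-1,\,X_1^\eta=-1]$ (equivalently $\prob[X_1=1,\,X_1^\eta=1]$) directly, whereas you match the disagreement probability and invoke exchangeability; these are equivalent one-parameter descriptions of the same family.
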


\begin{proof}
Let $X$ and $X^\eta$ be two $(1- \eta)$-correlated vectors on
$(\{-1, +1\}^n, \nu_\ga^{\otimes n})$ and let $(W, W^\rho)$ be
Gaussian vectors on $\mathbb R^n$ as defined in
Definition~\ref{def:GNS}.  Clearly, $\mu^{\otimes n}(A_G) =
\nu_\ga^{\otimes n}(A)$. To equate $Z(A, \eta)$ to $Z^{\mathcal
G}(A_G, \rho)$,  we want to choose $\rho>0$ such that the random
vectors $(X, X^\eta)$ and $\big(\psi^{\otimes n} (W),
\psi^{\otimes n} (W^\rho)\big)$ have the same distributions on
$\{-1, 1\}^n \times \{-1, 1\}^n$. Note that this is equivalent to
the condition
\[ \prob[X_1 =  - 1, X^\eta_1 = -1] = \prob[ W_1< \Phi^{-1}(\ga), W^\rho_1< \Phi^{-1}(\ga)], \]
which is same as
\[ \alpha  - \alpha(1- \alpha) \eta = \prob[ W_1< \Phi^{-1}(\ga), W^\rho_1< \Phi^{-1}(\ga)]. \]
The lemma now follows immediately.
\end{proof}

\begin{theorem}\label{Thm:Discrete2} Consider the product space $(\{-1,1\}^n, \nu_\ga^{\otimes n})$.
For any $n$, for any set $A \subset \{-1,1\}^n$, and for any $\eta \in
(0, 1)$,
\[
\mathrm{VAR}(A,\eta) \leq C_1 \cdot \left( \sum_{i=1}^n I^h_i(A)^2
\right)^{C_2 \rho^2},
\]
where $h(t)=\phi(\Phi^{-1}(t))$, $\rho$ is as defined in
Lemma~\ref{Lemma:5.1-new}, and $C_1,C_2>0$ are universal constants.
\end{theorem}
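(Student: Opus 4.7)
The plan is to mimic the reduction already used in Theorem~\ref{Thm:Discrete1} but with Gaussian noise sensitivity playing the role of the correlation estimate. For $A\subseteq\{-1,1\}^n$ set $A_G=(\psi^{\otimes n})^{-1}(A)\subseteq\Real^n$. Lemma~\ref{Lemma:5.1-new} was stated precisely so that $\mathrm{VAR}(A,\eta)=\vg(A_G,\rho)$ for the $\rho$ in the theorem, so it is enough to bound $\vg(A_G,\rho)$ by the right-hand side. Applying Theorem~\ref{thm:Gaussian-Quantitative-BKS} directly to $A_G$ gives a bound in terms of $\sum_i\ig_i(A_G)^2$, but one only has $\ig_i(A_G)\ge I^h_i(A_G)$ in general, which is the wrong direction for passing to discrete $h$-influences. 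The key idea is therefore to monotonize $A_G$ inside the Gaussian world before extracting influences.

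To do this, I would apply the shifting operator $M$ of Lemma~\ref{lem:shifting} to $A_G$. Parts (ii) and (iv) of that lemma imply
\[
\mathrm{VAR}(A,\eta)=\vg(A_G,\rho)\le \vg(M(A_G),\rho),
\]
and since $M(A_G)$ is increasing, Theorem~\ref{thm:Gaussian-Quantitative-BKS} combined with the equality $\ig_i(B)=I^h_i(B)$ for monotone $B$ (Lemmas~3.5 and 3.7 of \cite{Geom-Influences}) yields
\[
\vg(M(A_G),\rho)\le C_1\Big(\sum_{i=1}^n I^h_i(M(A_G))^2\Big)^{C_2\rho^2}.
\]
Now I invoke two facts to push this back to the discrete $h$-influences of $A$. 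First, since $h(t)=\phi(\Phi^{-1}(t))$ is concave and continuous, $h$-influences only decrease under the Gaussian shifting operator (Theorem~2.2 of \cite{keller11}), so $I^h_i(M(A_G))\le I^h_i(A_G)$. Second, for any $u\in\Real^n$ with $\psi^{\otimes n}(u)=J$ one has $(A_G)^u_i=\psi^{-1}(A^J_i)$, hence $\mu((A_G)^u_i)=\gamma(A^J_i)$; integrating $h$ of this equality against $\mu^{\otimes n}$ gives the identity $I^h_i(A_G)=I^h_i(A)$ already used in the proof of Theorem~\ref{Thm:Discrete1}. Chaining these inequalities produces $\mathrm{VAR}(A,\eta)\le C_1\big(\sum_i I^h_i(A)^2\big)^{C_2\rho^2}$, as required.

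The only real subtlety is the directional mismatch between the inequality $\ig_i\ge I^h_i$ (general sets) and the equality $\ig_i=I^h_i$ (monotone sets). This is what forces the argument to go through the Gaussian monotone rearrangement $M(A_G)$ rather than directly through $A_G$; note that no biased-cube shifting is needed, since the monotonization is performed entirely in $(\Real^n,\mu^{\otimes n})$ where Lemma~\ref{lem:shifting} is available. Everything else in the argument is bookkeeping already established earlier in the paper, which is presumably why the authors allow themselves to omit the details.
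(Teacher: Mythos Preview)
Your proposal is correct and follows essentially the same route as the paper: pass to $A_G$, monotonize via the shifting operator $M$ (using parts (ii) and (iv) of Lemma~\ref{lem:shifting} to control $\vg$), apply Theorem~\ref{thm:Gaussian-Quantitative-BKS} to $M(A_G)$, and then use the chain $\ig_i(M(A_G))=I^h_i(M(A_G))\le I^h_i(A_G)=I^h_i(A)$. Your explicit diagnosis of why the monotonization step is forced---namely, that for general sets one only has $\ig_i\ge I^h_i$, which points the wrong way---is exactly the point the paper leaves implicit.
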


\begin{proof}
Consider the set $A_G$ defined as above, and the corresponding
``monotonized'' set $M(A_G)$ (see Lemma~\ref{lem:shifting} above).
By Lemma~\ref{lem:shifting}(iv) and Lemma~\ref{Lemma:5.1-new},
\begin{equation}\label{Eq:5.2-new}
\vg(M(A_G),\rho) \ge \vg(A_G,\rho) = \mathrm{VAR}(A,\eta).
\end{equation}
On the other hand, by properties of the monotonization operator
$M$, we have:
\begin{equation}\label{Eq:5.3-new}
\ig_i(M(A_G)) = I^h_i(M(A_G)) \le I^h_i(A_G) = I^h_i(A)
\end{equation}
(see the proof of Lemma~\ref{lem:shifting}(iii) above). Applying
Corollary~\ref{thm:Gaussian-Quantitative-BKS} to the set $M(A_G)$,
we get:
\begin{equation}\label{Eq:5.4-new}
\vg(M(A_G),\rho) \leq C_1 \cdot \left( \sum_{i=1}^n \ig(M(A_G))^2
\right)^{C_2 \rho^2}.
\end{equation}
Combination of~\eqref{Eq:5.4-new} with~\eqref{Eq:5.2-new}
and~\eqref{Eq:5.3-new} yields the assertion.
\end{proof}
Let's  compare the above bound to the following bound obtained
 in~\cite[Theorem
7]{Keller-Kindler} in the regime when $\eta>0$ is small but fixed
and $\alpha \to 0$:
\[
\mathrm{VAR}(A,\eta) \leq c'_1 \cdot \left( \alpha(1-\alpha)
\sum_{i=1}^n I_i(A)^2 \right)^{\beta(\eta, \alpha) \cdot \eta},
\]
where $\beta(\eta, \alpha)\cdot \eta \asymp_\eta
1/\log(1/\alpha)$. Note that after switching back to ordinary
influences, Theorem~\ref{Thm:Discrete2} reads:
\begin{equation}\label{discrete_bkkkl}
\mathrm{VAR}(A,\eta) \leq C_1 \cdot \left( \alpha^2 \log(1/\alpha)
\sum_{i=1}^n I_i(A)^2 \right)^{C_2 \cdot \rho(\eta, \alpha)^2},
\end{equation}
We are interested in finding a reasonable lower bound (up to a constant that may depend on $\eta$) on $\rho(\eta, \alpha)$.
Set $t  = \bar \Phi^{-1}(\alpha) \asymp \sqrt{\log(1/\alpha)}$. Note that,
\begin{align*}  \prob[ W_1>t , W^\rho_1<  t] &\le \prob[ W_1>t] \prob[ t \sqrt{1- \rho^2} + \rho W_1' <  t]\\
 &= \alpha  \prob \left[  W_1' <  \frac{ t(1- \sqrt{1- \rho^2})}{\rho} \right]   \le \alpha \prob [  W_1' <  t \rho].
 \end{align*}
Since both $ \prob[ W_1>t , W^\rho_1<  t]$ and $ \alpha \prob [
W_1' <  t \rho]$ are increasing functions of $\rho$,  a lower
bound on $\rho(\eta, \alpha)$ can be achieved by solving $\eta
\alpha(1 - \alpha) =  \alpha \prob [  W_1' <  t \rho]$, which
yields $t \rho \asymp_{\eta} 1$, or, $\rho^2 \asymp_\eta 1/
\log(1/\alpha)$.  So, the asymptotic performance of
Theorem~\ref{Thm:Discrete2} matches with that of
\cite{Keller-Kindler} (which was shown in~\cite{Keller-Kindler} to
be essentially tight).
\medskip

We now relate our results to more common definitions of influences
in the product spaces $([q]^n, \gamma^{\otimes n})$.

\medskip

\noindent \textbf{Variance Influence.} This notion, used e.g.
in~\cite{hatami09,Maj-Stablest}, is defined as:
\[
\iv_i(A) := \mathbb{E}_{\gamma} [ \mathrm{Var}(\1_{A^{x}_i})].
\]
It is clear that the variance influence coincides with the
$h$-influence for $h(t)=t(1-t)$, and hence, it is always smaller
(up to a constant factor) than the $h$-influence with
$h(t)=\phi(\Phi^{-1}(t))$. Hence, Theorem~\ref{Thm:Discrete1}
holds without change for the variance influences. In order to find
a lower bound of variance influence in terms of $h$-influence, we
consider the contribution of a single fiber to $I^h_i(A)$ and to
$\iv_i(A)$. If $\mu(A^{x}_i)=t \le 1/2$, then these contributions
are $t \sqrt{\log(1/t)}$ and $t(1-t)$, respectively. Note that if
$\alpha$ is the size of the smallest atom in $([q],\gamma)$, then
either $t \in \{0,1\}$ or $t \in [\alpha, 1-\alpha]$. In both
cases,
\[
\frac{t \sqrt{\log(1/t)}}{t(1-t)} \leq 2\sqrt{\log(1/\alpha)}.
\]
Hence, for any $A$ and  $i$,
\[
I^h_i(A) \leq 2\sqrt{\log(1/\alpha)} \iv_i(A),
\]
and thus, Theorem~\ref{Thm:Discrete2} holds (for $q=2$)  with $4\log(1/\alpha)
\sum_i \iv_i(A)^2$ in place of $\sum_i I^h_i(A)^2$.

\medskip

\noindent \textbf{BKKKL Influence.} This influence, used
in~\cite{bourgain92,hatami09} is given by:
\[
I^{\BKKKL}_i(A) := \mathbb{E}_{\gamma} [ h(\mu(A^{x}_i))],
\]
where $h(t)=1$ if $t \in (0,1)$, and $h(t)=0$ if $t \in \{0,1\}$. This definition coincides with $I_i(A)$ for $q=2$ and we have already seen in \eqref{discrete_bkkkl} how Theorem~\ref{Thm:Discrete2} should look in this case.
As for Theorem~\ref{Thm:Discrete1}, since the
contribution of each fiber to $I^h_i(A)$ is either zero or at
least $\alpha \sqrt{\log(1/\alpha)}$, it follows that the theorem
holds with $\alpha^2 \log(1/\alpha) I_i^{\BKKKL}(A)I_i^{\BKKKL}(B)$
instead of $I_i^h(A) I_i^h(B)$.

\section{Open Problems}
\label{sec:Open}

We conclude the paper with a few directions for further research
suggested by our results and by recent related work.
\begin{enumerate}
\item The first issue left open in this paper is to prove a
quantitative BKS theorem for all other discrete product spaces. In
fact, we weren't able to deduce it by a reduction from the
Gaussian version even for the simplest case $([q]^n,
\lambda^{\otimes n})$ where $q>2$ and $\lambda$ is the uniform
measure on $[q]$. We note that we have a direct proof of
quantitative BKS for all discrete spaces, using a generalization
of the techniques used in~\cite{Keller-Kindler}, along with
hypercontractive estimates for general discrete measures obtained
by Wolff~\cite{Wolff}. However, the proof is cumbersome and the
result is not tight, and hence, a reduction from the Gaussian case
is more desirable.

\item It would be interesting to find alternative ``direct''
proofs of Theorems~\ref{thm:gaussian_talagrand}
and~\ref{thm:Gaussian-Quantitative-BKS}, which do not rely on
their counterparts on the discrete cube. In particular, we wonder
whether one can combine the reverse hypercontractivity technique
used in the proof of Theorem~\ref{thm:alt_bound} with the the
classical hypercontractivity used in the proof of
Theorem~\ref{thm:gaussian_talagrand} to obtain a new lower bound
that will enjoy the benefits of both theorems.

\item Probably the most interesting direction is to find
applications of the results. Both Talagrand's lower bound and the
BKS theorem have various applications, and even the recent
generalization of the BKS theorem to biased
measures~\cite{Keller-Kindler} was already applied to percolation
theory~\cite{ABGM}. On the other hand, Gaussian noise sensitivity
was recently studied by Kindler and o'Donnell~\cite{Guy-Ryan} and
used to obtain applications to isoperimetric inequalities and to
hardness of approximation. Hence, it will be interesting to find
also applications of Talagrand's lower bound or of the BKS theorem
in the Gaussian setting.

\item Finally, our understanding of influences in product spaces
is still very far from complete. In particular, only a very few is
known about influences with respect to non-product measures, and
it is even unclear what should be the natural definition of
influences in such a general setting (see~\cite{Graham-Grimmett}).
\end{enumerate}

\vspace{2cm}
\end{document}